\documentclass[a4paper,preprint,12pt]{elsarticle}

\usepackage[T1]{fontenc}
\usepackage{amsmath}
\usepackage{amssymb}
\usepackage{amsthm}
\usepackage{tikz}
\usepackage{comment}
\usepackage[hidelinks]{hyperref}

\newtheorem{theorem}{Theorem}[section]
\newtheorem{proposition}[theorem]{Proposition}
\newtheorem{lemma}[theorem]{Lemma}
\newtheorem{corollary}[theorem]{Corollary}
\newtheorem{definition}[theorem]{Definition}
\theoremstyle{remark}
\newtheorem{remark}[theorem]{Remark}
\newtheorem{example}[theorem]{Example}

\newcommand{\supp}{\operatorname{supp}}
\newcommand{\cone}{\operatorname{cone}}
\newcommand{\conv}{\operatorname{conv}}
\journal{Discrete Optimization}
\biboptions{sort&compress}

\hypersetup{
  pdftitle={Common Optimal Supports and Bottleneck Hierarchies in Truncated Cones},
  pdfauthor={Walid Ben-Ameur and Alessandro Maddaloni},
  pdfkeywords={common optimal support; blocker; bottleneck hierarchy; polyhedral combinatorics; box-integrality; integer decomposition property}
}

\begin{document}

\begin{frontmatter}

\title{Common Optimal Supports and Bottleneck Hierarchies in Truncated Cones}

\author[tsp]{Walid Ben-Ameur}
\ead{walid.benameur@telecom-sudparis.eu}
\author[tsp]{Alessandro Maddaloni}
\ead{alessandro.maddaloni@telecom-sudparis.eu}

\affiliation[tsp]{organization={SAMOVAR, T\'el\'ecom SudParis, Institut Polytechnique de Paris},
                  city={Palaiseau},
                  country={France}}

\begin{abstract}
 {Let \(K\subseteq\mathbb R_+^E\) be a closed convex cone and
\(P=K\cap[0,1]^E\). For a linear objective with positive optimum, we
study the intersection of the coordinate supports of all optimal
solutions.} We prove that this common support is a transversal of a
natural clutter associated with the positive directions of the cone
and introduce a decreasing hierarchy of bottleneck families that
stabilizes after at most \(|E|-1\) levels. In the polyhedral setting,
the first level consists exactly of the sets carrying feasible
upper-bound dual multipliers, while every set carrying an optimal
multiplier belongs to the stabilized family.
{The breadth of the join-semilattice generated under union by the
zero sets of the nonzero vertices gives an objective-independent upper
bound on the universal stabilization depth;} this bound is exact for
simplicial truncations. Every possible depth occurs, and the
\(|E|-1\) bound is sharp. Membership in the second level is
co-NP-complete and fixed-parameter tractable in the size of the
candidate bottleneck. Under upper-box-integrality and the integer
decomposition property, the hierarchy collapses at level two, and its
stabilized members are exactly the carriers of optimal upper-bound
multipliers. When \(P\) is integral, the common optimal support is the
union of the supports of all optimal dual solutions. Applications
include bipartite matchings, maximum-order cycle subdigraphs, binary
circulations, balanced hypergraph matchings, interval hypergraphs, and
maximum-weight closures.
\end{abstract}

\begin{keyword}
common optimal support \sep blocker \sep bottleneck hierarchy \sep
polyhedral combinatorics \sep box-integrality \sep integer decomposition property

\MSC[2020] 90C27 \sep 90C57 \sep 90C10 \sep 90C60 \sep 05C70
\end{keyword}

\end{frontmatter}

\section{Introduction}

{We study which coordinates are positive in every optimal solution
of a linear optimization problem over a unit-box truncation of a
nonnegative cone. Our focus is structural rather than algorithmic: we
seek descriptions of this common optimal support in terms of the
positive directions of the cone, an associated hierarchy of
bottlenecks, and dual solutions.}

Let $E$ be a finite set with $|E|\ge2$, let
$K\subseteq\mathbb R_+^E$ be a closed convex cone, and let
$P=K\cap[0,1]^E$, so that $K=\cone(P)$. Given an objective vector
$c\in\mathbb R^E$, let
$\alpha=\max\{c^\top y:y\in P\}$, and assume throughout that
$\alpha>0$. The corresponding optimal face is
$F=\{y\in P:c^\top y=\alpha\}$, and our main object of study is the common
support
$I=\bigcap_{y\in F}\supp(y)$,
where $\supp(y)=\{e\in E:y_e>0\}$. Since $F$ is compact,
$I=\bigcap_{v\in\operatorname{ext}(F)}\supp(v)$, where
$\operatorname{ext}(F)$ denotes the set of extreme points of $F$.
Indeed, if a coordinate vanishes at some point of $F$, then its minimum
over $F$ is zero and is attained at an extreme point.

{For \(0\)-\(1\) models, \(I\) is the set of variables fixed to one in
every optimum, that is, the positive part of what is called the
optimization backbone \cite{slaneywalsh2001}. In the applications of
Section~\ref{sec:applications}, \(I\) specializes to the set of vertices
covered by every maximum matching, the set of vertices contained in
every maximum-order cycle subdigraph, and the intersection of all
maximum-weight closures, among other examples. The matching case belongs
to the classical Dulmage--Mendelsohn theory \cite{dulmage1958}; the
maximum-order cycle-subdigraph problem is connected with the no-meet
matroid \cite{benameur2024nomeet}; and maximum-weight closures and the
structure of the associated minimum cuts are studied in
\cite{picard1976,picardqueyranne1980}.}

It is worth contrasting the common support $I$ with the standard primal 
optimal partition in continuous linear programming. By the Goldman--Tucker 
theorem~\cite{goldman1956theory}, the relative interior of the optimal face $F$---such 
as its analytic center~\cite{megiddo1989}---identifies the \emph{union} of the 
supports of all optimal solutions (the maximal support). In contrast, our focus 
is on characterizing the \emph{intersection} $I$ of these supports. Because $I$ 
represents the coordinates that are strictly positive in \emph{every} optimal 
solution, characterizing this set requires moving beyond standard continuous 
complementarity theory to exploit the conic and combinatorial structures 
of the underlying truncated cone.

Our approach begins with a support-transversal theorem. Associated with
the cone \(K\) is a natural clutter \(\mathcal C\) consisting of the
inclusion-minimal supports of its \(c\)-positive extreme rays. We prove
that the common support \(I\) is always a transversal of \(\mathcal C\),
which in particular implies that \(I\) is \emph{nonempty}.

We then introduce a decreasing hierarchy
\(\mathcal B_1\supseteq\mathcal B_2\supseteq\cdots\), together with its
inclusion-minimal members \(\mathcal D_1,\mathcal D_2,\ldots\). The
higher levels encode increasingly strong common-support obstructions.
The hierarchy always stabilizes by level \(|E|-1\).

In the polyhedral setting, the hierarchy has a direct dual meaning. Its
first level consists precisely of the sets carrying feasible
upper-bound multipliers, every set carrying an optimal multiplier
belongs to the stabilized family, and hence
$
\{\text{optimal dual carriers}\}
\subseteq\mathcal B_\infty
\subseteq\mathcal B_1
=\{\text{feasible dual carriers}\}.
$
The zero sets of the nonzero vertices yield a second structural
description of stabilization:
 {the breadth of the join-semilattice they generate under union gives
an objective-independent upper bound on the universal stabilization depth.}
For simplicial truncations this bound is exact. This yields simplices realizing every possible depth and, in particular,
proves that the general \(|E|-1\) bound is best possible. We also prove
that deciding membership in \(\mathcal B_2\) is co-NP-complete, while
membership is fixed-parameter tractable in the size of the candidate
bottleneck.

Our second main structural result concerns truncated cones satisfying
natural integrality properties. Under upper-box-integrality and the
integer decomposition property, the hierarchy collapses at level two,
\(\mathcal B_2=\mathcal B_3=\cdots=\mathcal B_\infty\). Moreover, these families consist exactly of the sets carrying optimal
upper-bound multipliers. If \(P\) is merely integral, the common
optimal support is the union of the supports of all optimal dual
solutions.

 {For the matching and closure models, these specializations recover
familiar structure; for the other models, they give analogous
common-support and higher-order bottleneck statements.}

The paper is organized as follows. Section~2 develops the general conic
theory and proves finite stabilization. Section~3 treats polyhedral
cones, establishes the dual-support sandwich,
 {studies the semilattice breadth of the vertex zero sets and simplicial
truncations,} and proves the complexity results.
Section~4 gives the level-two collapse and dual-core theorems under
integrality assumptions. Section~5 presents the combinatorial
applications, and Section~6 concludes with open directions.

\section{General conic case}

{As already mentioned, we assume throughout the paper that $\alpha>0$ where $\alpha:=\max\{c^\top y:y\in P\}$.}

\begin{proposition}\label{thm:main}
Let $z\in K$ satisfy $c^\top z>0$. Then
$I\cap\supp(z)\neq\emptyset.$
\end{proposition}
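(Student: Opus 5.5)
The plan is a contradiction argument: assuming $I$ misses $\supp(z)$, I will build a point of $P$ whose objective value exceeds $\alpha$. Throughout, let $S=\supp(z)$. Since $c^\top z>0$, we have $z\neq0$, so $S\neq\emptyset$.

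Suppose, for contradiction, that $I\cap S=\emptyset$. By the definition $I=\bigcap_{y\in F}\supp(y)$, for each $e\in S$ there is then an optimal point $y^{e}\in F$ with $y^{e}_e=0$. I set
\[
\bar y=\frac1{|S|}\sum_{e\in S}y^{e}.
\]
Since $F$ is a face of the convex set $P$, it is convex, so $\bar y\in F$. This gives $\bar y\in K$, $\bar y\in[0,1]^E$ and $c^\top\bar y=\alpha$.

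The key observation is that averaging strictly separates $\bar y$ from the upper bounds on $S$. Fix $f\in S$. The term $y^{f}$ contributes $0$ in coordinate $f$, and every other term contributes at most $1$. Hence
\[
\bar y_f\le \frac{|S|-1}{|S|}<1 \qquad\text{for every } f\in S.
\]
So every coordinate in $S$ has slack with respect to the box constraint $y_f\le1$.

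Now I perturb along $z$ and consider $w=\bar y+tz$ for small $t>0$.
\begin{itemize}
\item Because $K$ is a convex cone and $\bar y,z\in K$, we have $w\in K$.
\item Because $z\ge0$, we have $w\ge0$.
\item Off $S$, $z$ vanishes, so $w_e=\bar y_e\le1$ there.
\item On $S$, each coordinate satisfies $\bar y_f<1$, so choosing $t\le\min_{f\in S}(1-\bar y_f)/z_f$ keeps $w_f\le1$.
\end{itemize}
With such a $t>0$ we get $w\in P$ and $c^\top w=\alpha+t\,c^\top z>\alpha$. This contradicts the maximality of $\alpha$, which proves the proposition.

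There is no real obstacle in this argument. The only point needing care is producing a single optimal point with slack $y_f<1$ simultaneously on all of $\supp(z)$, and averaging the witnesses $y^{e}$ over the optimal face does exactly that.
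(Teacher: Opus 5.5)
Your proof is correct and follows the paper's argument: you average the witnesses $y^e$, $e\in\supp(z)$, to get an optimal point with slack on every coordinate of $\supp(z)$, then move slightly along $z$ to exceed $\alpha$. Your explicit step size $t\le\min_{f\in S}(1-\bar y_f)/z_f$ simply spells out the paper's choice of $\varepsilon$.
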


\begin{proof}
Suppose, for a contradiction, that $I\cap\supp(z)=\emptyset$. Then, for every $e\in\supp(z)$, there exists an optimal solution $y^e\in F$ such that $y^e_e=0$. Let
$\bar y=\frac{1}{|\supp(z)|}\sum_{e\in\supp(z)}y^e.$
Since $F$ is convex, $\bar y\in F$. Moreover, for every $e\in\supp(z)$ at least one term in the average has zero $e$-coordinate, so $\bar y_e<1$. Since $z\ge0$, there exists $\varepsilon>0$ such that $\bar y+\varepsilon z\le\mathbf1$. As $K$ is a convex cone and $\bar y,z\in K$, we have $\bar y+\varepsilon z\in K$, and hence $\bar y+\varepsilon z\in P$. But
$c^\top (\bar y+\varepsilon z)=\alpha+\varepsilon c^\top z>\alpha,$
contradicting the definition of $\alpha$.
\end{proof}

 Let $\mathcal C$ be the clutter of inclusion-minimal supports $\supp(z)$ such that $z\in K$ and $c^\top z>0$. Recall that the blocker $b(\mathcal C)$ (in the sense of Fulkerson \cite{fulkerson1970}) is the family of inclusion-minimal sets $B\subseteq E$ such that $B\cap C\neq\emptyset$ for every $C\in\mathcal C$.

\begin{corollary}\label{cor:blocker}
The set $I$ is a transversal of $\mathcal C$. Consequently, it contains a member of $b(\mathcal C)$.
\end{corollary}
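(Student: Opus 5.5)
The corollary follows almost immediately from Proposition~\ref{thm:main}. We check that $I$ meets every member of $\mathcal C$, and then use finiteness of $E$ to extract an inclusion-minimal transversal inside $I$.

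For the first claim, let $C\in\mathcal C$. By definition of $\mathcal C$ there is some $z\in K$ with $c^\top z>0$ and $\supp(z)=C$. Applying Proposition~\ref{thm:main} to this $z$ gives $I\cap C=I\cap\supp(z)\neq\emptyset$. Hence $I$ intersects every member of $\mathcal C$, i.e.\ $I$ is a transversal of $\mathcal C$.

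For the second claim, note that $E$ is finite, so the family of transversals of $\mathcal C$ contained in $I$ is finite. It is also nonempty, since it contains $I$ itself. We pick a member $B$ of this family of minimum cardinality. Any transversal $B'\subsetneq B$ would also be a transversal contained in $I$ and would have smaller cardinality, which is impossible. So $B$ is an inclusion-minimal transversal of $\mathcal C$, which means $B\in b(\mathcal C)$ and $B\subseteq I$.

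There is one degenerate case to check. If $\mathcal C$ were empty, then $b(\mathcal C)=\{\emptyset\}$ and the statement would be trivial. In fact $\mathcal C\neq\emptyset$: the assumption $\alpha>0$ gives a point $y\in P\subseteq K$ with $c^\top y>0$, so the family of admissible supports is nonempty and, being finite, has inclusion-minimal members. Consequently the transversal $I$ is nonempty, as noted in the introduction.

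None of these steps presents a real obstacle, since all the content is in Proposition~\ref{thm:main}. The only points needing care are that every $C\in\mathcal C$ is realized as the support of some $z\in K$ with $c^\top z>0$, and the finiteness argument used to pass from a transversal to a blocker member.
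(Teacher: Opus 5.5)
Your proof is correct and follows the paper's argument: for each $C\in\mathcal C$ you apply Proposition~\ref{thm:main} to a witness $z$ with $\supp(z)=C$, and then you extract an inclusion-minimal transversal inside $I$. The extra details you supply, namely the minimum-cardinality extraction and the check via $\alpha>0$ that $\mathcal C\neq\emptyset$, are sound and simply make explicit what the paper leaves implicit.
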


\begin{proof}
By Proposition~\ref{thm:main}, $I$ intersects every member of $\mathcal C$. Hence $I$ contains an inclusion-minimal transversal of $\mathcal C$, that is, a member of $b(\mathcal C)$.
\end{proof}

The clutter $\mathcal C$ can also be described in terms of extreme rays.

\begin{proposition}\label{prop:poly}
Let $K\subseteq\mathbb R_+^E$ be a closed convex cone. Then
\[
\mathcal C=
\min_{\subseteq}\{\supp(r): r\text{ generates an extreme ray of }K,\ c^\top r>0\}.
\]
\end{proposition}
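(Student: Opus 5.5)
The plan is to compare two families of subsets of $E$:
\[
\mathcal S=\{\supp(z): z\in K,\ c^\top z>0\}
\quad\text{and}\quad
\mathcal S_{\rm ext}=\{\supp(r): r \text{ generates an extreme ray of } K,\ c^\top r>0\}.
\]
Clearly $\mathcal S_{\rm ext}\subseteq\mathcal S$, and $\mathcal C=\min_\subseteq\mathcal S$ by definition. The key claim is that every member of $\mathcal S$ contains a member of $\mathcal S_{\rm ext}$. Granting this, the minimal elements of the two families coincide, by the following argument.

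First, let $X\in\min\mathcal S$. By the claim there is $Y\in\mathcal S_{\rm ext}$ with $Y\subseteq X$. Since $Y\in\mathcal S$, minimality gives $Y=X$. Then $X$ is minimal in $\mathcal S_{\rm ext}$, because $\mathcal S_{\rm ext}\subseteq\mathcal S$. Conversely, let $Y\in\min\mathcal S_{\rm ext}$, and suppose some $X\in\mathcal S$ satisfies $X\subsetneq Y$. By the claim $X$ contains some $Y'\in\mathcal S_{\rm ext}$, so $Y'\subsetneq Y$, contradicting the minimality of $Y$. Hence $Y\in\min\mathcal S$.

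To prove the claim, I would first represent $K$ through its extreme rays. Because $K\subseteq\mathbb R_+^E$, the cone is pointed. The set $Q=K\cap\{x:\mathbf 1^\top x=1\}$ is closed and lies in the standard simplex, so it is compact and convex. Every nonzero $z\in K$ satisfies $\mathbf 1^\top z>0$, so $z/(\mathbf 1^\top z)\in Q$. By the finite-dimensional Krein--Milman (Minkowski) theorem together with Carath\'eodory's theorem, this point is a finite convex combination of extreme points of $Q$. Each extreme point of $Q$ generates an extreme ray of $K$; this is the standard base correspondence for pointed cones, and it is the one step requiring care. I would check it directly: if $q\in\operatorname{ext}(Q)$ and $q=a+b$ with $a,b\in K$, then normalizing $a$ and $b$ into $Q$ forces both to be multiples of $q$. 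Hence $z=\sum_i\lambda_i r_i$ with $\lambda_i>0$ and each $r_i$ generating an extreme ray.

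The remaining two observations are immediate. Since all $r_i\ge 0$ and $\lambda_i>0$, we have $\supp(r_i)\subseteq\supp(z)$ for every $i$. Since $0<c^\top z=\sum_i\lambda_i c^\top r_i$, some $r_i$ has $c^\top r_i>0$. That $\supp(r_i)$ lies in $\mathcal S_{\rm ext}$ and is contained in $\supp(z)$, which proves the claim.

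I expect the only nontrivial point to be the justification that a closed cone in $\mathbb R^E_+$ is generated by its extreme rays. This is handled by the compact base $Q$ and Krein--Milman as described above; everything else is bookkeeping with supports of nonnegative vectors.
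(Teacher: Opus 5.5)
Your proof is correct and follows essentially the same route as the paper. Both arguments apply Minkowski and Carath\'eodory to the compact base $K\cap\{x:\mathbf 1^\top x=1\}$ to write a $c$-positive $z\in K$ as a conic combination of extreme rays whose supports lie in $\supp(z)$, at least one of them $c$-positive, and then compare the inclusion-minimal members of the two families. The only differences are organizational: you isolate the minimal-element comparison as an abstract lemma, and you check explicitly that extreme points of the base generate extreme rays of $K$, a step the paper takes for granted.
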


\begin{proof}
Since $K\subseteq\mathbb R_+^E$ is closed and convex, applying Minkowski's theorem and Carath\'eodory's theorem to the compact
base
$
K\cap\{x\in\mathbb R_+^E:\mathbf1^\top x=1\},
$
every vector of \(K\) is a finite conic combination of extreme rays of
\(K\). \\
Let $C\in\mathcal C$, and choose $z\in K$ such that $\supp(z)=C$ and $c^\top z>0$. Write
$z=\sum_i\lambda_i r^i,$
where $\lambda_i>0$ and each $r^i$ generates an extreme ray of $K$. Since all vectors are nonnegative, $\supp(r^i)\subseteq C$ for every $i$. Since $c^\top z>0$, at least one ray $r^i$ satisfies $c^\top r^i>0$. By the inclusion-minimality of $C$, we get $\supp(r^i)=C$. \\
Conversely, let $r$ be a $c$-positive extreme ray whose support is inclusion-minimal among the supports of $c$-positive extreme rays. Since $r\in K$ and $c^\top r>0$, the set $\supp(r)$ contains some member $C\in\mathcal C$. By the first part, $C$ is the support of a $c$-positive extreme ray. The minimality of $\supp(r)$ gives $C=\supp(r)$.
\end{proof}

Since any nonzero element of $K$ can be scaled to an element of $P$ with the same support, Proposition~\ref{thm:main} is equivalent to the same statement for every $z\in P$ with $c^\top z>0$. 
The following theorem is a higher-order version.

\begin{theorem}\label{th:general}
Let $z^1,\ldots,z^\ell\in P$ satisfy
$\sum_{i=1}^\ell c^\top z^i>\alpha(\ell-1).$
Then
$
\left(\bigcap_{i=1}^\ell\supp(z^i)\right)\cap I\neq\emptyset.
$
\end{theorem}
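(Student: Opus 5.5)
The plan is to argue by contradiction, generalizing the averaging argument of Proposition~\ref{thm:main}. Put $S=\bigcap_{i=1}^\ell\supp(z^i)$ and suppose $S\cap I=\emptyset$. Instead of adding a small multiple of one direction to an optimal point, I will take a suitable positive combination of the $z^i$ and one optimal point $\bar y$. Rescaling this combination should give a point of $P$ with objective value strictly larger than $\alpha$.

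First I build $\bar y$. For each $e\in S$ we have $e\notin I$, so there is some $y^e\in F$ with $y^e_e=0$. Let $\bar y$ be the average of these $y^e$. If $S=\emptyset$, let $\bar y$ be any point of $F$. Since $F$ is convex, $\bar y\in F$, so $c^\top\bar y=\alpha$, and $\bar y_e<1$ for every $e\in S$. Let $\mu=\max_{e\in S}\bar y_e<1$, with $\mu=0$ if $S=\emptyset$. Fix $\delta>0$ with $\delta(1-\mu)\ge1$, and define
\[
u=\frac{1}{\ell-1+\delta}\Bigl(\sum_{i=1}^\ell z^i+\delta\bar y\Bigr).
\]
This is a positive combination of elements of $K$, so $u\in K$ and $u\ge0$.

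The key step is to check that $u\le\mathbf 1$, by splitting the coordinates according to $S$.
\begin{itemize}
\item If $e\notin S$, some $z^i_e=0$ and the other $z^j_e$ are at most $1$. Hence $\sum_i z^i_e\le\ell-1$, and since $\bar y_e\le1$, the numerator is at most $\ell-1+\delta$.
\item If $e\in S$, then $\sum_i z^i_e\le\ell$. The numerator is at most $\ell+\delta\bar y_e\le\ell+\delta\mu$, and this is at most $\ell-1+\delta$ exactly when $\delta(1-\mu)\ge1$, which holds by the choice of $\delta$.
\end{itemize}
So $u\in K\cap[0,1]^E=P$. For the objective,
\[
c^\top u=\frac{\sum_i c^\top z^i+\delta\alpha}{\ell-1+\delta}>\frac{(\ell-1)\alpha+\delta\alpha}{\ell-1+\delta}=\alpha,
\]
using the hypothesis $\sum_i c^\top z^i>\alpha(\ell-1)$. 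This contradicts the maximality of $\alpha$.

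I expect the only delicate point to be choosing the normalization. The weight $\ell-1$ in the denominator is exactly what the coordinates outside $S$ allow. The extra optimal point $\bar y$, weighted by a large $\delta$, is what absorbs the surplus of $1$ on the coordinates in $S$. The case $S=\emptyset$ needs no special treatment: the estimate $\sum_i z^i_e\le\ell-1$ then holds for every coordinate, and the same computation with $\delta=1$ already gives the contradiction. This shows that $S$ must in fact meet $I$.
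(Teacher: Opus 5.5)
Your proof is correct and is essentially the paper's argument. Taking $\delta=|S|$ is admissible, since $\mu\le 1-1/|S|$ gives $\delta(1-\mu)\ge1$; with that choice $\delta\bar y=\sum_{e\in S}y^e$ and $\ell-1+\delta=\ell+|S|-1$, so your $u$ is exactly the paper's normalized vector $w/(\ell+r-1)$. Beyond this reparametrization, the only difference is that your free parameter $\delta>0$ lets you treat $\ell=1$ and $S=\emptyset$ in one computation, whereas the paper handles these separately via Proposition~\ref{thm:main} and the vector $\frac{1}{\ell-1}\sum_i z^i$.
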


\begin{proof}
The case $\ell=1$ is Proposition~\ref{thm:main}. Let $\ell\ge2$. If $\bigcap_{i=1}^\ell\supp(z^i)=\emptyset$, then every coordinate is zero in at least one of the vectors $z^i$. Hence
$\bar z=\frac{1}{\ell-1}\sum_{i=1}^\ell z^i$
belongs to $P$, while $c^\top \bar z>\alpha$, a contradiction. Thus $S:=\bigcap_{i=1}^\ell\supp(z^i)$ is nonempty.

Assume now, for a contradiction, that $S\cap I=\emptyset$. For every $e\in S$, choose $y^e\in F$ such that $y^e_e=0$. Let $r=|S|$ and
$w=\sum_{i=1}^\ell z^i+\sum_{e\in S}y^e.$
For any coordinate $e'\in S$, the vector $y^{e'}$ is zero at $e'$. For any coordinate $e'\notin S$, at least one of the vectors $z^i$ is zero at $e'$. Hence every coordinate is positive in at most $\ell+r-1$ of the vectors appearing in the sum defining $w$. Therefore
$\frac{1}{\ell+r-1}w\in P.$
Moreover,
$c^\top w=\sum_{i=1}^\ell c^\top z^i+r\alpha>\alpha(\ell-1)+r\alpha=\alpha(\ell+r-1),$
contradicting the optimality of $\alpha$.
\end{proof}

Equivalently, Theorem~\ref{th:general} says that, for feasible vectors $z^1,\ldots,z^\ell$,
\[
\left(\bigcap_{i=1}^\ell\supp(z^i)\right)\cap I=\emptyset
\quad\Longrightarrow\quad
\sum_{i=1}^\ell c^\top z^i\le\alpha(\ell-1).
\]
This can be viewed as a weighted pigeonhole principle for supports: if $\ell$ feasible points have total value larger than $(\ell-1)$ times the optimum, then they share a coordinate that is present in every optimal solution. Equivalently, if $\delta(z)=\alpha-c^\top z$ denotes the deficit of $z$, then
$\sum_{i=1}^\ell\delta(z^i)<\alpha \quad\Longrightarrow\quad I\cap\bigcap_{i=1}^\ell\supp(z^i)\neq\emptyset.$ {This leads naturally to the study of $\ell$-bottleneck sets, defined formally below:} 
\begin{definition}[$\ell$-bottleneck property]
A set $J\subseteq E$ satisfies the $\ell$-bottleneck property, or simply is an $\ell$-bottleneck set, if for all $z^1,\ldots,z^\ell\in P$,
\[
\left(\bigcap_{i=1}^\ell\supp(z^i)\right)\cap J=\emptyset
\quad\Longrightarrow\quad
\sum_{i=1}^\ell c^\top z^i\le\alpha(\ell-1).
\]
\end{definition}

Let $\mathcal B_\ell$ denote the family of all $\ell$-bottleneck sets. If $J\in\mathcal B_\ell$ and $J\subseteq J'\subseteq E$, then $J'\in\mathcal B_\ell$.

\begin{proposition}
\label{prop:monotone}
For every $\ell\ge1$,
$\mathcal B_{\ell+1}\subseteq\mathcal B_\ell.$
\end{proposition}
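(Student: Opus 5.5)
Let $J\in\mathcal B_{\ell+1}$; we show $J\in\mathcal B_\ell$. Take $z^1,\ldots,z^\ell\in P$ with $\left(\bigcap_{i=1}^\ell\supp(z^i)\right)\cap J=\emptyset$. We must prove $\sum_{i=1}^\ell c^\top z^i\le\alpha(\ell-1)$. The idea is to append a suitable $(\ell+1)$-st vector to the family, apply the $(\ell+1)$-bottleneck property, and cancel one copy of $\alpha$.

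The natural choice for the extra vector is $z^{\ell+1}:=y$ for an optimal $y\in F$. This vector lies in $P$ and has $c^\top y=\alpha$. Adding one more vector can only shrink the intersection of supports:
\[
\bigcap_{i=1}^{\ell+1}\supp(z^i)\subseteq\bigcap_{i=1}^{\ell}\supp(z^i).
\]
Hence the intersection of all $\ell+1$ supports still misses $J$. The $(\ell+1)$-bottleneck property of $J$ then gives
\[
\sum_{i=1}^{\ell}c^\top z^i+\alpha\le\alpha\ell.
\]
Subtracting $\alpha$ from both sides yields exactly $\sum_{i=1}^\ell c^\top z^i\le\alpha(\ell-1)$, which is what we need.

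Two points need checking. First, $F$ is nonempty, which holds because $P$ is compact and nonempty (it contains $0$), so the maximum $\alpha$ is attained. Second, the choice of $y$ does not interact badly with $J$. It does not, because the only thing used is that enlarging the family shrinks the common support. So there is no real obstacle beyond ensuring that an optimal point exists.
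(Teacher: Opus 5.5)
Your proof is correct and is essentially the paper's argument. The paper also appends an optimal solution as the $(\ell+1)$-st vector and uses the fact that the common support can only shrink; it just argues in the contrapositive direction, assuming $\sum_{i=1}^{\ell}c^\top z^i>\alpha(\ell-1)$ and concluding that $J$ meets $\bigcap_{i=1}^{\ell}\supp(z^i)$.
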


\begin{proof}
Let $J\in\mathcal B_{\ell+1}$, and let $z^1,\ldots,z^\ell\in P$ satisfy $\sum_{i=1}^\ell c^\top z^i>\alpha(\ell-1)$. Let $z^{\ell+1}$ be any optimal solution. Then
$\sum_{i=1}^{\ell+1}c^\top z^i>\alpha\ell.$
Since $J\in\mathcal B_{\ell+1}$, the set $J$ intersects $\bigcap_{i=1}^{\ell+1}\supp(z^i)$, and therefore it also intersects $\bigcap_{i=1}^{\ell}\supp(z^i)$. Hence $J\in\mathcal B_\ell$.
\end{proof}

We say that $J$ satisfies the $\infty$-bottleneck property if it satisfies the $\ell$-bottleneck property for every integer $\ell\ge1$. Let $\mathcal B_\infty$ be the corresponding family. By Theorem~\ref{th:general}, $I\in\mathcal B_\infty$.

\begin{theorem}
\label{1pro:J}
A nonempty set $J\subseteq E$ satisfies the $\infty$-bottleneck property if and only if it satisfies the $|J|$-bottleneck property. Consequently,
$\mathcal B_\infty=\mathcal B_{|E|-1}.$
\end{theorem}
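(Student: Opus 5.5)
The plan is to reduce an arbitrary family of \(\ell\) feasible vectors to at most \(|J|\) vectors, one ``witness'' per element of \(J\), using the fact that every feasible vector has value at most \(\alpha\).

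\textbf{Main implication.} Let \(k=|J|\ge1\) and assume \(J\in\mathcal B_k\). For \(\ell\le k\), Proposition~\ref{prop:monotone} applied repeatedly gives \(\mathcal B_k\subseteq\mathcal B_\ell\), so \(J\in\mathcal B_\ell\). Now fix \(\ell>k\) and suppose, for a contradiction, that there are \(z^1,\ldots,z^\ell\in P\) with
\[
\sum_i c^\top z^i>\alpha(\ell-1)
\qquad\text{and}\qquad
\Bigl(\bigcap_i\supp(z^i)\Bigr)\cap J=\emptyset .
\]
The steps are:
\begin{enumerate}
\item For each \(e\in J\), some index \(i(e)\) has \(z^{i(e)}_e=0\). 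Let \(T=\{i(e):e\in J\}\). Then \(1\le|T|\le k\), using \(J\neq\emptyset\), and \(\bigl(\bigcap_{i\in T}\supp(z^i)\bigr)\cap J=\emptyset\).
\item Every \(z^i\) with \(i\notin T\) satisfies \(c^\top z^i\le\alpha\). Hence
\[
\sum_{i\in T}c^\top z^i>\alpha(\ell-1)-\alpha(\ell-|T|)=\alpha(|T|-1).
\]
\item Pad the family \(\{z^i\}_{i\in T}\) with \(k-|T|\) copies of an optimal solution \(y\in F\). Adding vectors can only shrink the intersection of supports, so that intersection still misses \(J\). Each padded vector contributes exactly \(\alpha\), so the total exceeds \(\alpha(|T|-1)+\alpha(k-|T|)=\alpha(k-1)\).
\item This contradicts \(J\in\mathcal B_k\). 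Hence \(J\in\mathcal B_\ell\) for all \(\ell\), i.e.\ \(J\in\mathcal B_\infty\). The reverse implication is trivial.
\end{enumerate}

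\textbf{Consequence \(\mathcal B_\infty=\mathcal B_{|E|-1}\).} The inclusion \(\mathcal B_\infty\subseteq\mathcal B_{|E|-1}\) is immediate. For the converse, take \(J\in\mathcal B_{|E|-1}\) and split into three cases.
\begin{itemize}
\item \(J=\emptyset\) cannot occur. Indeed, \(\emptyset\notin\mathcal B_1\), since an optimal \(y\) has \(c^\top y=\alpha>0\), and \(\mathcal B_{|E|-1}\subseteq\mathcal B_1\) by Proposition~\ref{prop:monotone}.
\item If \(1\le|J|\le|E|-1\), then Proposition~\ref{prop:monotone} gives \(J\in\mathcal B_{|J|}\), and the first part gives \(J\in\mathcal B_\infty\).
\item If \(J=E\), argue directly, as in the first paragraph of the proof of Theorem~\ref{th:general}. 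If \(\bigcap_i\supp(z^i)\) misses \(E\), every coordinate is positive in at most \(\ell-1\) of the \(z^i\). For \(\ell\ge2\), this puts \(\frac1{\ell-1}\sum_i z^i\) in \(P\), hence \(\sum_i c^\top z^i\le\alpha(\ell-1)\). For \(\ell=1\), the only vector with empty support is \(0\), and \(c^\top 0=0\le0\). Thus \(E\in\mathcal B_\infty\).
\end{itemize}

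The only delicate point is step 2: the selection must keep at most \(|J|\) vectors while retaining enough objective value. This works because each discarded vector loses at most \(\alpha\) from the total.
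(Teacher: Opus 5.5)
Your proof is correct and follows the paper's argument: keep one zero-witness index per element of $J$, drop the other vectors at a cost of at most $\alpha$ each, and contradict the bottleneck property at a level at most $|J|$. The only differences are minor: you pad with optimal solutions to reach level exactly $|J|$ where the paper invokes Proposition~\ref{prop:monotone} for $J\in\mathcal B_{|Q|}$, and you verify explicitly that $\emptyset\notin\mathcal B_{|E|-1}$ and $E\in\mathcal B_\infty$, which the paper only asserts (the latter follows there from $I\in\mathcal B_\infty$ and upward closure).
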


\begin{proof}
Let $J=\{e_1,\ldots,e_{|J|}\}$ and suppose that $J\in\mathcal B_{|J|}$. Assume, for a contradiction, that $J\notin\mathcal B_\ell$ for some $\ell>|J|$. Then there exist $z^1,\ldots,z^\ell\in P$ such that
$\sum_{i=1}^\ell c^\top z^i>\alpha(\ell-1)$
and
$\left(\bigcap_{i=1}^\ell\supp(z^i)\right)\cap J=\emptyset.$
For each $e_j\in J$, choose an index $i_j$ such that $e_j\notin\supp(z^{i_j})$. Let $Q\subseteq\{1,\ldots,\ell\}$ be the set of chosen indices. Then $|Q|\le |J|$ and
$\left(\bigcap_{i\in Q}\supp(z^i)\right)\cap J=\emptyset.$
Since $c^\top z^i\le\alpha$ for every $i$, we have
$\sum_{i\in Q}c^\top z^i \ge \sum_{i=1}^\ell c^\top z^i-(\ell-|Q|)\alpha > \alpha(|Q|-1).$
This contradicts $J\in\mathcal B_{|Q|}$, which follows from $J\in\mathcal B_{|J|}$ and Proposition~\ref{prop:monotone}. Thus $J\in\mathcal B_\infty$. The reverse implication is immediate.

It remains to prove the displayed equality. Since $\mathcal B_\infty\subseteq\mathcal B_{|E|-1}$ is immediate, let $J\in\mathcal B_{|E|-1}$. If $J=E$, then $J\in\mathcal B_\infty$. Otherwise, $|J|\le |E|-1$, and Proposition~\ref{prop:monotone} gives $J\in\mathcal B_{|J|}$. By the first part, $J\in\mathcal B_\infty$.
\end{proof}

Define the inclusion-minimal bottleneck clutters
$
\mathcal D_\ell=\min_{\subseteq}\{J\in\mathcal B_\ell\}$.
Then $\mathcal D_1=b(\mathcal C)$. Moreover, since $\mathcal B_{\ell+1}\subseteq\mathcal B_\ell$, every member of $\mathcal D_{\ell+1}$ contains at least one member of $\mathcal D_\ell$. By Theorem~\ref{1pro:J}, the hierarchy $\mathcal D_\ell$ stabilizes no later than $\ell=|E|-1$. Observe that the blocker of $\mathcal D_\ell$
is given by 
\[
b(\mathcal D_\ell)=
\min_{\subseteq}
\left\{
\bigcap_{i=1}^\ell\supp(z^i):
 z^i\in P,\ \sum_{i=1}^\ell c^\top z^i>\alpha(\ell-1)
\right\}.
\]

 {Indeed, let $\mathcal R = \min_{\subseteq}
\left\{
\bigcap_{i=1}^\ell\supp(z^i):
 z^i\in P,\sum_{i=1}^\ell c^\top z^i>\alpha(\ell-1)
\right\}$. 
  By definition, $\mathcal B_\ell$ is the set of transversals of $\mathcal R $, and $\mathcal D_\ell$ is the set of minimal elements of $\mathcal B_\ell$, implying that $\mathcal D_\ell = b(\mathcal R)$.  Consequently, $b(\mathcal D_\ell)= b (b (\mathcal R)) = \mathcal R$ using \cite{edmonds1970bottleneck}. }

{ Next theorem states that each member of $\mathcal D_\infty$ is a subset of $I$.}
\begin{theorem}\label{pro:J}
$\bigcup\limits_{J\in\mathcal D_\infty}J\subseteq I.$  
\end{theorem}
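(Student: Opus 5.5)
The plan is to show that $\mathcal B_\infty$ is closed under intersection with $I$: if $J\in\mathcal B_\infty$, then $J\cap I\in\mathcal B_\infty$. Granting this, take any $J\in\mathcal D_\infty$. Then $J\cap I\in\mathcal B_\infty$ and $J\cap I\subseteq J$, so the inclusion-minimality of $J$ forces $J\cap I=J$, that is, $J\subseteq I$. Taking the union over all $J\in\mathcal D_\infty$ gives the statement.

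To prove the closure property, fix $J\in\mathcal B_\infty$ and $\ell\ge1$. Let $z^1,\ldots,z^\ell\in P$ satisfy $\sum_{i=1}^\ell c^\top z^i>\alpha(\ell-1)$, and put $S=\bigcap_{i=1}^\ell\supp(z^i)$. We must show that $S\cap J\cap I\neq\emptyset$. Suppose not, and let $T=S\cap J$, so that $T\cap I=\emptyset$. For each $e\in T$, choose an optimal solution $y^e\in F$ with $y^e_e=0$; this is possible precisely because $e\notin I$. This is the same device used in the proof of Theorem~\ref{th:general}, where extra optimal vectors are appended to kill the coordinates lying outside $I$.

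Now consider the $\ell+|T|$ vectors $z^1,\ldots,z^\ell$ together with $y^e$ for $e\in T$. Their total objective value is
\[
\sum_{i=1}^\ell c^\top z^i+|T|\alpha>\alpha(\ell+|T|-1).
\]
Their common support is contained in $S\setminus T$, since each $e\in T$ is missing from $\supp(y^e)$. Hence this common support is disjoint from $J$, because $S\cap J=T$. This contradicts $J\in\mathcal B_{\ell+|T|}$, which holds since $J\in\mathcal B_\infty$. If $T=\emptyset$, the same argument with no extra vectors contradicts $J\in\mathcal B_\ell$ directly.

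The only delicate point is to do the bookkeeping with the level shifting from $\ell$ to $\ell+|T|$, which is harmless because $J$ lies in every level. Everything else follows directly from the definitions.
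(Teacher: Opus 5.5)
Your proof is correct and uses the same device as the paper: append optimal solutions that vanish on coordinates outside $I$, which raises the level while keeping the total value above the threshold. The paper argues by a one-element contradiction: minimality gives a violating family for $J\setminus\{e\}$ with $e\in J\setminus I$, and it appends a single optimal $y$ with $y_e=0$. You instead prove the closure property $J\in\mathcal B_\infty\Rightarrow J\cap I\in\mathcal B_\infty$ in one step by appending $|T|$ vectors, which is the same argument packaged slightly more generally.
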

\begin{proof}
Let $J \in \mathcal D_\infty=\mathcal D_{|E|-1}$. Assume that $J\setminus I\neq\emptyset$, and let $e\in J\setminus I$. By the minimality of $J$, the set $J\setminus\{e\}$ does not satisfy the $\infty$-bottleneck property. Thus there exist $z^1,\ldots,z^\ell\in P$ such that
$\sum_{i=1}^\ell c^\top z^i>\alpha(\ell-1)$
and
$\left(\bigcap_{i=1}^\ell\supp(z^i)\right)\cap (J\setminus\{e\})=\emptyset.$
Since $e\notin I$, there exists $y\in F$ with $y_e=0$. Let $z^{\ell+1}=y$. Then
$\sum_{i=1}^{\ell+1}c^\top z^i>\alpha\ell.$
Moreover, $\bigcap_{i=1}^{\ell+1}\supp(z^i)$ does not meet $J\setminus\{e\}$ and also does not contain $e$. Hence it does not meet $J$, contradicting $J\in\mathcal B_\infty$.
\end{proof}

To test whether $J\subseteq E$ belongs to $\mathcal B_\ell$, we maximize the total value of $\ell$ feasible vectors under the condition that, for each $v\in J$, at least one of the $\ell$ vectors has zero $v$-coordinate. This gives the mixed-integer formulation
\begin{align}
V_\ell(J)=\max\quad &\sum_{i=1}^\ell c^\top z^i \nonumber\\
\text{s.t.}\quad & z^i\in K\cap[0,1]^E && i=1,\ldots,\ell,\nonumber\\
& z^i_v\le y^i_v && i=1,\ldots,\ell,\ v\in J, \label{eq:IP}\\
& y^i_v\in\{0,1\} && i=1,\ldots,\ell,\ v\in J,\nonumber\\
& \sum_{i=1}^\ell y^i_v\le \ell-1 && v\in J.\nonumber
\end{align}
By definition,
$J\in\mathcal B_\ell\quad\Longleftrightarrow\quad V_\ell(J)\le\alpha(\ell-1).$

\section{Polyhedral cones: dual carriers,
\texorpdfstring{ {semilattice breadth}}{semilattice breadth}, and complexity}
\label{sec:polyhedral}

Throughout this section, \(K\subseteq\mathbb R_+^E\) is a rational
polyhedral cone. Let
\(K^*=\{s\in\mathbb R^E:s^\top x\geq0\text{ for every }x\in K\}\)
be its dual cone. By linear-programming duality,
{
\(\alpha=\max\{c^\top x:x\in K, x\leq\mathbf1\}
=\min\{\mathbf1^\top u:u\geq0,\ u-c\in K^*\}\)}. The vector \(u\) is the multiplier vector associated with the upper
bounds \(x_e\leq1\).

\subsection{Feasible and optimal dual carriers}

Define
\(\mathfrak F=\{J\subseteq E:\exists u\geq0,\ \supp(u)\subseteq J,
\ u-c\in K^*\}\) and
\(\mathfrak O=\{J\subseteq E:\exists u\geq0,\ \supp(u)\subseteq J,
\ u-c\in K^*,\ \mathbf1^\top u=\alpha\}\). Thus \(\mathfrak F\) is the family of sets carrying a feasible
upper-bound multiplier, while \(\mathfrak O\) is the family of sets
carrying an optimal one.

\begin{theorem}[Dual-support sandwich]
\label{prop:dual-sandwich}
For every rational polyhedral cone \(K\subseteq\mathbb R_+^E\),
\(\mathfrak O\subseteq\mathcal B_\infty
\subseteq\mathcal B_1=\mathfrak F\). Consequently, \(\mathcal D_1\) is the family of inclusion-minimal
supports of feasible upper-bound multiplier vectors.
\end{theorem}

\begin{proof}
Let \(J\in\mathfrak O\), witnessed by \(u\), and let
\(z^1,\ldots,z^\ell\in P\) have common support disjoint from \(J\).
For every \(e\in\supp(u)\), at least one of the entries
\(z^1_e,\ldots,z^\ell_e\) is zero, and hence
\(\sum_{i=1}^{\ell}z^i_e\leq\ell-1\). Since
\(u-c\in K^*\), \(u\geq0\), and \(\mathbf1^\top u=\alpha\),
$
\sum_{i=1}^{\ell}c^\top z^i
\leq
u^\top\sum_{i=1}^{\ell}z^i
\leq
(\ell-1)\mathbf1^\top u
=
(\ell-1)\alpha.
$
Thus \(J\in\mathcal B_\infty\). The inclusion
\(\mathcal B_\infty\subseteq\mathcal B_1\) is immediate.

It remains to prove \(\mathcal B_1=\mathfrak F\). Let
\(L_J=\{x\in\mathbb R^E:x_e=0\text{ for every }e\in J\}\). By scaling,
\(J\in\mathcal B_1\Longleftrightarrow
c^\top x\leq0\text{ for every }x\in K\cap L_J\). Since \(K\) is polyhedral,
\((K\cap L_J)^*=K^*+L_J^\perp\). Therefore
\(-c=s+w\) for some \(s\in K^*\) and some vector \(w\) supported in
\(J\). Write \(w=w^+-w^-\), where $w^+$ and $w^-$ are nonnegative vectors. As
\(K\subseteq\mathbb R_+^E\), every nonnegative vector belongs to
\(K^*\). Hence, with \(u=w^-\), we have
\(u\geq0\), \(\supp(u)\subseteq J\), and \(u-c=s+w^+\in K^*\). Thus \(J\in\mathfrak F\). Conversely, if \(J\in\mathfrak F\), witnessed
by \(u\), and \(x\in K\cap L_J\), then
\(c^\top x\leq u^\top x=0\); hence \(J\in\mathcal B_1\).
\end{proof}


\subsection{\texorpdfstring{ {Semilattice breadth}}{Semilattice breadth}
and simplicial truncations}

Let
$
\operatorname{ext}(P)\setminus\{0\}=\{r^1,\ldots,r^m\}.
$
For \(j\in[m]\), let \(Z_j=\{e\in E:r^j_e=0\}\). We write
\(\mathcal Z(P)=(Z_1,\ldots,Z_m)\).
 {We denote by \(\operatorname{br}_{\cup}(\mathcal Z(P))\) the breadth
of the finite join-semilattice generated by \(\mathcal Z(P)\) under set
union \cite{ditor1984}. Equivalently,
\(\operatorname{br}_{\cup}(\mathcal Z(P))\) is the smallest integer
\(q\geq1\) such that, for every \(S\subseteq[m]\), there exists
\(T\subseteq S\), with \(|T|\leq q\), satisfying}
\(
 {\bigcup_{j\in T}Z_j=\bigcup_{j\in S}Z_j.}
\)
We write \(\mathcal B_\ell(c)\) when we wish to emphasize the
dependence of the bottleneck hierarchy on the objective, and define
the \emph{universal stabilization depth} of \(P\) by
\[
s(P)=\min\{q\geq1:\mathcal B_q(c)=\mathcal B_\infty(c)
\text{ for every }c\text{ with }\alpha(c)>0\}.
\]

\begin{theorem}[ {Semilattice-breadth bound}]
\label{prop:semilattice-breadth-bound}
For every polyhedral truncation $P=K\cap[0,1]^E$,
 {$s(P)\leq \operatorname{br}_{\cup}(\mathcal Z(P))$.}
Consequently,
 {$s(P)\leq\min\{|E|-1,\operatorname{br}_{\cup}(\mathcal Z(P))\}$.}
\end{theorem}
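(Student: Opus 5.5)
The plan is to show that for \(q=\operatorname{br}_{\cup}(\mathcal Z(P))\) and every objective \(c\) with \(\alpha(c)>0\), every \(J\in\mathcal B_q(c)\) lies in \(\mathcal B_\ell(c)\) for all \(\ell>q\). Since \(\mathcal B_{\ell}\subseteq\mathcal B_q\) for \(\ell\ge q\) by Proposition~\ref{prop:monotone}, this gives \(\mathcal B_q=\mathcal B_\infty\) and hence \(s(P)\le q\). The second bound then follows by combining this with Theorem~\ref{1pro:J}, which gives \(\mathcal B_{|E|-1}=\mathcal B_\infty\) for every \(c\), so \(s(P)\le|E|-1\) as well. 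The argument is by contraposition: I start from a violating \(\ell\)-tuple and extract a violating subtuple of size at most \(q\).

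\emph{Step 1 (reduction to vertices).} Suppose \(J\notin\mathcal B_\ell\). Then there are \(z^1,\ldots,z^\ell\in P\) with \(\sum_i c^\top z^i>\alpha(\ell-1)\) and \(J\cap\bigcap_i\supp(z^i)=\emptyset\). For each \(e\in J\), fix an index \(i(e)\) with \(z^{i(e)}_e=0\), and let \(J_i=\{e\in J:i(e)=i\}\). Each \(z^i\) then lies in the face \(G_i=P\cap\{x:x_e=0,\ e\in J_i\}\) of \(P\), because \(x_e\ge0\) is valid for \(P\). The tuple \(z^1,\ldots,z^\ell\) lies in \(G_1\times\cdots\times G_\ell\), and the objective \(\sum_i c^\top x^i\) is separable over the factors. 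So I can replace each \(z^i\) by a vertex of \(G_i\) maximizing \(c^\top x\), which is a vertex of \(P\). This does not decrease the total value and keeps \(z^i_e=0\) for \(e\in J_i\). Hence I may assume every \(z^i\) is a vertex of \(P\).

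\emph{Step 2 (excluding the zero vertex).} If some \(z^{i_0}=0\), then \(\sum_i c^\top z^i\le(\ell-1)\alpha\), since each of the other \(\ell-1\) terms is at most \(\alpha\). This contradicts the choice of the tuple. So each \(z^i=r^{j_i}\) is a nonzero vertex, and the disjointness condition becomes \(J\subseteq\bigcup_{i}Z_{j_i}\).

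\emph{Step 3 (breadth extraction).} Apply the definition of breadth to \(S=\{j_1,\ldots,j_\ell\}\subseteq[m]\). This gives \(T\subseteq S\) with \(|T|\le q\) and \(\bigcup_{j\in T}Z_j\supseteq J\). Choose one index \(i\) for each \(j\in T\), forming a set \(Q\) of \(|T|\) indices. Then \(J\cap\bigcap_{i\in Q}\supp(z^i)=\emptyset\). As in the proof of Theorem~\ref{1pro:J}, using \(c^\top z^i\le\alpha\) for the discarded indices,
\[
\sum_{i\in Q}c^\top z^i\ \ge\ \sum_{i=1}^{\ell}c^\top z^i-(\ell-|Q|)\alpha\ >\ \alpha(|Q|-1).
\]
So \(J\notin\mathcal B_{|Q|}\). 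Since \(|Q|\le q\), Proposition~\ref{prop:monotone} gives \(J\notin\mathcal B_q\), which is the desired contradiction.

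The only genuinely delicate point is Step 1. There the combinatorial condition "for every \(e\in J\) some \(z^i_e=0\)" must be fixed into a choice of faces before optimizing, so that the vertex replacement is legitimate. Once the tuple consists of nonzero vertices, the rest is the counting argument already used in Theorem~\ref{1pro:J}.
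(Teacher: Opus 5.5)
Your proof is correct and follows essentially the paper's argument. You replace each $z^i$ by a nonzero vertex of $P$ with at least the same objective value whose zero set still covers the required part of $J$, apply the breadth to the resulting index set, and keep at most $q$ of the points.

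Your two variations are harmless. Optimizing over the face $G_i$ plays the role of the paper's choice of a vertex with positive coefficient in a convex representation of $z^i$. Using the counting argument of Theorem~\ref{1pro:J} together with Proposition~\ref{prop:monotone} plays the role of the paper's padding with copies of an optimal vertex. Just take $T\neq\emptyset$, which is possible since $q\ge1$, so that $|Q|\ge1$ and $\mathcal B_{|Q|}$ is defined.
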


\begin{proof}
 {Let \(q=\operatorname{br}_{\cup}(\mathcal Z(P))\), fix an objective \(c\)}
with positive optimum \(\alpha\), and suppose that
\(J\notin\mathcal B_\ell(c)\) for some \(\ell\). Choose a violating
family \(z^1,\ldots,z^\ell\). Each \(c^\top z^i\) is positive, since
otherwise the remaining \(\ell-1\) terms would sum to at most
\((\ell-1)\alpha\). {Therefore, for each $e \in J$, there exists an index $i \in [\ell]$ such that $z^i_e = 0$.}

Write each \(z^i\) as a convex combination of the vertices of \(P\).
For every \(i\), some nonzero vertex \(r^{j_i}\) occurring with
positive coefficient satisfies \(c^\top r^{j_i}\geq c^\top z^i\).
Since all points of \(P\) are nonnegative, every zero coordinate of
\(z^i\) is also zero in \(r^{j_i}\). Hence
\(J\subseteq\bigcup_{i=1}^{\ell}Z_{j_i}\) and
\(\sum_{i=1}^{\ell}c^\top r^{j_i}>(\ell-1)\alpha\).
Put \(d_j=\alpha-c^\top r^j\geq0\). The sum of the deficits of the
selected occurrences is smaller than \(\alpha\).
 {By the definition of semilattice breadth, at most \(q\) of the
selected zero sets have the same union.} The sum of their deficits is
still smaller than \(\alpha\).
After adding copies of an optimal vertex if necessary, we obtain
\(q\) points whose zero sets cover \(J\) and whose total value is
larger than \((q-1)\alpha\). Thus \(J\notin\mathcal B_q(c)\), proving
\(\mathcal B_q(c)=\mathcal B_\infty(c)\). The second inequality follows
also from Theorem~\ref{1pro:J}.
\end{proof}

\begin{remark}
 {The semilattice-breadth bound need not be exact for a nonsimplicial}
truncation. For instance, if \(K=\mathbb R_+^2\), then
\(P=[0,1]^2\). The zero sets of its nonzero vertices are
\(\{1\}\), \(\{2\}\), and \(\emptyset\), so
 {\(\operatorname{br}_{\cup}(\mathcal Z(P))=2\).} On the other hand,
Theorem~\ref{1pro:J} gives
\(\mathcal B_1(c)=\mathcal B_\infty(c)\) for every objective with
positive optimum, since \(|E|-1=1\). Hence \(s(P)=1<2\).
\end{remark}

{We will now focus on the simplex case. Recall that
\(
P=\conv\{0,r^1,\ldots,r^m\}
\)
is a simplex if the $r^j$ are linearly independent. This requires that
$|E|\geq m$.}

\begin{theorem}[Universal depth of a simplicial truncation]
\label{th:simplicial-breadth}
Let $K\subseteq\mathbb R_+^E$ be a polyhedral cone and suppose that
$
P=K\cap[0,1]^E=\operatorname{conv}\{0,r^1,\ldots,r^m\}$
is a simplex. Let
$
Z_j:=\{e\in E:r^j_e=0\}, j\in[m]$,
and let $\mathcal Z(P)=\{Z_1,\ldots,Z_m\}$. Then
$
 {s(P)=\operatorname{br}_{\cup}(\mathcal Z(P))}$.
\end{theorem}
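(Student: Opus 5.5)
By Theorem~\ref{prop:semilattice-breadth-bound} we already have $s(P)\le\operatorname{br}_{\cup}(\mathcal Z(P))$. So the plan is to prove the reverse inequality. Write $q=\operatorname{br}_{\cup}(\mathcal Z(P))$; the case $q=1$ is trivial since $s(P)\ge1$. For $q\ge2$ we will construct one objective $c$ with $\alpha(c)>0$ and one set $J$ with $J\in\mathcal B_{q-1}(c)\setminus\mathcal B_q(c)$, which gives $s(P)\ge q$.

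\emph{Step 1: an irredundant subfamily.} Since $q-1$ does not satisfy the defining property of breadth, there is $S\subseteq[m]$ such that no $T\subseteq S$ with $|T|\le q-1$ has $\bigcup_{j\in T}Z_j=\bigcup_{j\in S}Z_j$. Take $T\subseteq S$ of minimum cardinality with the same union. Then $|T|\ge q$ and $T$ is irredundant: each $Z_j$ with $j\in T$ contains an element lying in no other $Z_{j'}$ with $j'\in T$. Any subfamily of an irredundant family is irredundant, so we keep $q$ indices $j_1,\dots,j_q$ and choose elements $e_k\in Z_{j_k}\setminus\bigcup_{k'\ne k}Z_{j_{k'}}$. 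Put $J=\{e_1,\dots,e_q\}$ and $S^*=\{j_1,\dots,j_q\}$.

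\emph{Step 2: the objective.} Because $r^1,\dots,r^m$ are linearly independent, there is $c\in\mathbb R^E$ with $c^\top r^j=1$ for $j\in S^*$ and $c^\top r^j=0$ for $j\notin S^*$. Every $z\in P$ is $z=\sum_j\lambda_jr^j$ with $\lambda\ge0$ and $\sum_j\lambda_j\le1$. Hence
\[
c^\top z=\sum_{j\in S^*}\lambda_j\le1,
\]
and $\alpha=1>0$. Nonnegativity gives the key fact: $z_e=0$ if and only if $e\in Z_j$ for every $j$ with $\lambda_j>0$.

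\emph{Step 3: $J\notin\mathcal B_q(c)$.} The vertices $r^{j_1},\dots,r^{j_q}$ have total value $q>(q-1)\alpha$. The vertex $r^{j_k}$ vanishes at $e_k$, so their common support misses $J$.

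\emph{Step 4: $J\in\mathcal B_{q-1}(c)$.} This is the main point. Suppose $z^1,\dots,z^{q-1}\in P$ have common support disjoint from $J$ and total value $>q-2$. Then each deficit $1-c^\top z^i$ is $<1$, so $c^\top z^i>0$ and $z^i$ puts positive weight on some $j\in S^*$. By the key fact, if $z^i_{e_k}=0$ then every such $j$ must satisfy $e_k\in Z_j$. Within $S^*$ only $j_k$ has this property, so $z^i$ vanishes on at most one element of $J$. Thus $q-1$ points cannot vanish on all $q$ elements of $J$, a contradiction.

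Hence $\mathcal B_{q-1}(c)\ne\mathcal B_q(c)=\mathcal B_\infty(c)$, so $s(P)\ge q$, and equality follows. The delicate step is Step 4: vertices outside $S^*$ may also vanish on $e_k$, and giving them objective value $0$ (possible by linear independence) is what neutralizes them.
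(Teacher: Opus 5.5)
Your proof is correct and follows the paper's argument: an objective, obtained from linear independence, equals a positive constant on a chosen set of vertices and vanishes on the rest, so every point of positive value has its zero set inside the zero set of some chosen vertex. The only variation is the witness. The paper takes $J=\bigcup_{j\in S}Z_j$ for a non-compressible $S$ and derives a contradiction from writing $J$ as a union of at most $q$ of the $Z_j$. You instead pass to an irredundant $q$-subfamily and take $J$ to be a set of private representatives, which turns the last step into a pigeonhole count and exhibits $J\in\mathcal B_{q-1}(c)\setminus\mathcal B_q(c)$ directly.
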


\begin{proof}
 {Theorem~\ref{prop:semilattice-breadth-bound} gives}
$
 {s(P)\leq\operatorname{br}_{\cup}(\mathcal Z(P))}$.
We prove the reverse inequality. Let
 {$q<\operatorname{br}_{\cup}(\mathcal Z(P))$. By the definition of
semilattice breadth, there exists $S\subseteq[m]$ such that}
$
J:=\bigcup_{j\in S}Z_j
$
cannot be represented as the union of at most $q$ members of
$\{Z_j:j\in S\}$.

Since $P$ is a simplex, the nonzero vertices
$r^1,\ldots,r^m$ are linearly independent. Hence there exists
$c\in\mathbb R^E$ such that
\[
c^\top r^j=
\begin{cases}
\alpha,&j\in S,\\
0,&j\notin S,
\end{cases}
\]
for some $\alpha>0$. Thus the optimum value of $c$ over $P$ is
$\alpha$.

We first show that $J\notin\mathcal B_\infty(c)$. The vertices
$r^j$, $j\in S$, have zero sets whose union is $J$, while
$
\sum_{j\in S}c^\top r^j=|S|\alpha>(|S|-1)\alpha$.
Thus they violate the bottleneck condition at level $|S|$, so
$J\notin\mathcal B_{|S|}(c)$, implying that 
$
J\notin\mathcal B_\infty(c).
$

We next show that $J\in\mathcal B_q(c)$. Suppose, to the
contrary, that $J\notin\mathcal B_q(c)$. Then there exist
$x^1,\ldots,x^q\in P$ whose common support is disjoint from $J$ and
such that
$
\sum_{i=1}^q c^\top x^i>(q-1)\alpha$.
In particular, since $c^\top x^i\leq\alpha$ for every $i$, we have
$c^\top x^i>0$ for every $i$. Write each $x^i$ as a convex
combination of $0,r^1,\ldots,r^m$. Since
$c^\top r^j=0$ for $j\notin S$, for each $i$ there is some
$j_i\in S$ occurring with positive coefficient. Hence
$
 Z(x^i):=\{e \in E : x^i_e = 0\} \subseteq Z_{j_i}$.
Moreover, the common support of the $x^i$ is disjoint from $J$, so
$
J\subseteq\bigcup_{i=1}^q Z(x^i)
\subseteq\bigcup_{i=1}^q Z_{j_i}
\subseteq\bigcup_{j\in S}Z_j=J.
$
Thus
$
J=\bigcup_{i=1}^q Z_{j_i}$,
contradicting the choice of $S$. Therefore $J\in\mathcal B_q(c)$.

We have therefore found $c$ and $J$ such that
$
J\in\mathcal B_q(c)\setminus\mathcal B_\infty(c)$ implying that $s(P)>q$. Since this holds for every
 {$q<\operatorname{br}_{\cup}(\mathcal Z(P))$, we obtain}
$
 {s(P)\geq\operatorname{br}_{\cup}(\mathcal Z(P)).}
$
 {Together with Theorem~\ref{prop:semilattice-breadth-bound}, this gives}
$
 {s(P)=\operatorname{br}_{\cup}(\mathcal Z(P))}$.
\end{proof}

\begin{proposition}[All universal stabilization depths occur]
\label{th:all-depths}
For every pair of integers \(1\leq q\leq m\), there exists a rational
simplicial cone \(K_{m,q}\subseteq\mathbb R_+^{m+1}\) such that
\(P_{m,q}=K_{m,q}\cap[0,1]^{m+1}\) is a simplex with \(m\) nonzero vertices and
universal stabilization depth exactly \(q\).
\end{proposition}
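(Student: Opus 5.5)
We will build the cone directly from prescribed vertices, so that Theorem~\ref{th:simplicial-breadth} reduces the claim to a computation of the semilattice breadth of the zero-set family. Index the coordinates of $\mathbb R^{m+1}$ by $\{0,1,\ldots,m\}$. Choose zero sets $Z_1,\ldots,Z_m\subseteq[m]$ as follows:
\[
Z_j=\{j\}\quad (1\le j\le q),\qquad Z_j=[j]=\{1,\ldots,j\}\quad (q<j\le m).
\]
Put $r^j=\mathbf 1-\chi^{Z_j}\in\{0,1\}^{m+1}$, so that $r^j_0=1$ and the zero set of $r^j$ is exactly $Z_j$. Let $K_{m,q}=\cone\{r^1,\ldots,r^m\}$, which is rational and polyhedral.

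\emph{Step 1: the truncation is the intended simplex.} If $x=\sum_j\lambda_jr^j$ with $\lambda\ge0$ and $x\le\mathbf 1$, then $x_0=\sum_j\lambda_j\le1$. Hence $x\in\conv\{0,r^1,\ldots,r^m\}$. The reverse inclusion is clear because each $r^j\in[0,1]^{m+1}$. So $P_{m,q}=\conv\{0,r^1,\ldots,r^m\}$. The guard coordinate $0$ is exactly why we work in dimension $m+1$.

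\emph{Step 2: linear independence.} This is the only step needing care. Suppose $\sum_j\mu_jr^j=0$. Coordinate $0$ gives $\sum_j\mu_j=0$. Subtracting this from coordinate $k\ge1$ gives
\[
\sum_{j:\,k\in Z_j}\mu_j=0\qquad\text{for every }k\in[m].
\]
Now peel off the indices in decreasing order.
\begin{itemize}
\item For $k>q$, the sets containing $k$ are $Z_j$ with $j\ge k$ and $j>q$. Taking $k=m,m-1,\ldots,q+1$ in turn therefore yields $\mu_m=\cdots=\mu_{q+1}=0$.
\item For $k\le q$, the sets containing $k$ are $Z_k=\{k\}$ and the $Z_j$ with $j>q$, whose coefficients are already zero. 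Hence $\mu_k=0$.
\end{itemize}
Thus $P_{m,q}$ is a simplex with exactly $m$ nonzero vertices, and Theorem~\ref{th:simplicial-breadth} applies.

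\emph{Step 3: the breadth equals $q$.} Take any $S\subseteq[m]$ and consider two cases.
\begin{itemize}
\item If $S$ contains some $j>q$, let $j^*=\max(S\cap\{q+1,\ldots,m\})$. Every $Z_j$ with $j\in S$ is contained in $[j^*]=Z_{j^*}$, because singletons $\{j\}$ with $j\le q$ lie in $[q]\subseteq[j^*]$. So the union over $S$ is attained by the single member $Z_{j^*}$.
\item If $S\subseteq[q]$, the union is $S$ itself, realized by $|S|\le q$ members. Hence $\operatorname{br}_{\cup}\le q$.
\end{itemize}
Conversely, $S=[q]$ has union $[q]$. Any subfamily of $\{Z_j:j\in S\}$ with the same union must contain every singleton $\{1\},\ldots,\{q\}$, so it needs $q$ members. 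Hence $\operatorname{br}_{\cup}(\mathcal Z(P_{m,q}))=q$.

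By Theorem~\ref{th:simplicial-breadth}, $s(P_{m,q})=q$. In particular, the case $q=m$ gives universal depth $m=|E|-1$, which shows that the bound of Theorem~\ref{1pro:J} is sharp.
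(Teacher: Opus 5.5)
Your proof is correct and follows the paper's route. Both use the guard coordinate $0$ and generators $r^j=\mathbf 1-\chi^{Z_j}$, identify the truncation with the simplex via $x_0=\sum_j\lambda_j$, and invoke Theorem~\ref{th:simplicial-breadth} to reduce everything to a breadth computation. The differences are minor: the paper uses $q$ disjoint chains $Z_{i,1}\subset\cdots\subset Z_{i,m_i}$ where you use $q$ singletons dominated by a single chain, and you carry out the linear-independence check that the paper only asserts.
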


\begin{proof}
Choose positive integers \(m_1+\cdots+m_q=m\), and let
\(X_1,\ldots,X_q\) be pairwise disjoint sets, where
\(X_i=\{x_{i,1},\ldots,x_{i,m_i}\}\). For \(i\in[q]\) and
\(t\in[m_i]\), define \(Z_{i,t}=\{x_{i,1},\ldots,x_{i,t}\}\). Within each block the sets form a chain. Hence any union of members of
this family is represented by retaining at most the largest selected
 {member of each chain, so its semilattice breadth is at most \(q\).} On the
other hand, the \(q\) terminal sets
\(Z_{i,m_i}=X_i\), \(i\in[q]\), form an irredundant subfamily (that is, their union  cannot be expressed as the union of less than $q$ sets).
 {Therefore the semilattice breadth equals \(q\).}

Let \(E=\{0\}\mathbin{\dot\cup}X_1\mathbin{\dot\cup}\cdots
\mathbin{\dot\cup}X_q\), and define
\(r^{i,t}=\mathbf1-\chi^{Z_{i,t}}\) and
\(K_{m,q}=\cone\{r^{i,t}:i\in[q],\ t\in[m_i]\}\). Observe that the $r^{i,t}$ are linearly independent. 
If \(x=\sum_{i,t}\lambda_{i,t}r^{i,t}\in K_{m,q}\), then
$x_0=\sum_{i,t}\lambda_{i,t}$. Consequently, if $x$ is in  $K_{m,q}\cap[0,1]^E $,  then it can be written as $x = (1-x_0) 0 + \sum_{i,t}\lambda_{i,t}r^{i,t}$ proving that $x \in \conv\bigl(\{0\}\cup\{r^{i,t}:i\in[q],\ t\in[m_i]\}\bigr)$. Therefore
$K_{m,q}\cap[0,1]^E=
\conv\bigl(\{0\}\cup\{r^{i,t}:i\in[q],\ t\in[m_i]\}\bigr)$.
Theorem~\ref{th:simplicial-breadth} shows that its universal depth is $q$.
\end{proof}

\begin{corollary}[Sharpness of the general stabilization bound]
\label{cor:sharpness}
For every integer \(m\geq2\), there exist a simplicial rational cone
\(K_m\subseteq\mathbb R_+^{m+1}\) and a unit objective \(c\) such
that, for every \(\ell\geq1\),
$
\mathcal B_\ell
=
\{J\subseteq E:0\in J\}
\cup
\{J\subseteq[m]:|J|\geq\ell+1\},
$
where \(E=\{0,1,\ldots,m\}\). Consequently,
\(\mathcal B_1\supsetneq\mathcal B_2\supsetneq\cdots
\supsetneq\mathcal B_m=\mathcal B_\infty\). Thus the bound \(|E|-1\) in Theorem~\ref{1pro:J} is best possible,
even when \(P\) is a  simplex.
\end{corollary}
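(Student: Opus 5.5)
The plan is to take the extreme case $q=m$ of the construction in Proposition~\ref{th:all-depths}, where every block is a singleton. Then compute the hierarchy directly for the unit objective $c=\chi^{\{0\}}$.

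\textbf{Construction.} Let $E=\{0,1,\ldots,m\}$ and $r^j=\mathbf 1-\chi^{\{j\}}$ for $j\in[m]$. Set $K_m=\cone\{r^1,\ldots,r^m\}$ and $c=\chi^{\{0\}}$. The proof of Proposition~\ref{th:all-depths} (with $m_i=1$) already gives two facts: the $r^j$ are linearly independent, and $P=\conv\{0,r^1,\ldots,r^m\}$ is a simplex. For $x=\sum_j\lambda_jr^j\in P$ we have
\[
x_0=\sum_j\lambda_j=c^\top x,\qquad x_e=\sum_{j\neq e}\lambda_j \quad (e\in[m]).
\]
Hence $\alpha=1>0$. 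The only structural fact I need is the zero set of a nonzero $x\in P$. If $x\neq0$, then $x_0>0$. Moreover, $x_e=0$ for some $e\in[m]$ forces $\lambda_k=0$ for all $k\neq e$, i.e.\ $x=\lambda_er^e$. So every nonzero point of $P$ has at most one zero coordinate, and that coordinate lies in $[m]$. Points with $c^\top x=0$ are exactly $x=0$.

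\textbf{Computing $\mathcal B_\ell$.} There are three cases.

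First, let $0\in J$, and let $z^1,\ldots,z^\ell\in P$ have common support disjoint from $J$. Then some $z^i$ has $z^i_0=0$, hence $c^\top z^i=0$, and the total is at most $(\ell-1)\alpha$. So $J\in\mathcal B_\infty$. (This is also the dual-sandwich argument with $u=c$.)

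Second, let $J\subseteq[m]$ with $|J|\le\ell$. Take the vertices $r^j$ for $j\in J$ and pad with copies of $r^1$ to get $\ell$ points. This also covers $J=\emptyset$. Their zero sets cover $J$ and their total value is $\ell>\ell-1$, so $J\notin\mathcal B_\ell$.

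Third, let $J\subseteq[m]$ with $|J|\ge\ell+1$, and suppose $z^1,\ldots,z^\ell$ violate the property. Each $c^\top z^i$ must be positive, since otherwise the sum is at most $\ell-1$. So each $z^i$ is nonzero and, by the structural fact, its zero set meets $[m]$ in at most one element. Then $\ell$ zero sets cannot cover the $\ell+1$ elements of $J$, a contradiction. Hence $J\in\mathcal B_\ell$.

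Together these give exactly the displayed formula for $\mathcal B_\ell$.

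\textbf{Strictness and stabilization.} For $\ell<m$, any $J\subseteq[m]$ with $|J|=\ell+1$ lies in $\mathcal B_\ell\setminus\mathcal B_{\ell+1}$, so every inclusion is strict. For $\ell\ge m$, no subset of $[m]$ has $\ell+1$ elements, so $\mathcal B_\ell=\{J:0\in J\}$ for all $\ell\ge m$. Thus $\mathcal B_m=\mathcal B_\infty$, which agrees with Theorem~\ref{1pro:J} since $|E|-1=m$. This shows the $|E|-1$ bound is attained by a simplex.

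The only delicate step is the zero-set characterization of nonzero points of $P$, which is what makes the padding and pigeonhole arguments exact. It follows from the formulas for $x_0$ and $x_e$ above.
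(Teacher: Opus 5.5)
Your proof is correct and follows essentially the same route as the paper. You use the singleton-block case $q=m$ of Proposition~\ref{th:all-depths} with $c=e_0$, the identity $z_e=t-\lambda_e$ showing that nonzero points have positive $0$-coordinate and vanish in at most one coordinate of $[m]$, and the same three-case check with padding by copies of a vertex. The only addition is that you explicitly verify the strictness of each inclusion and the stabilization at level $m$, which the paper leaves implicit.
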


\begin{proof}
Take \(r^j=\mathbf1-e_j\) for \(j\in[m]\),
\(K_m=\cone\{r^1,\ldots,r^m\}\), and \(c=e_0\). This is the case \(q=m\) of Proposition~\ref{th:all-depths}, and
\(P_m=\conv\{0,r^1,\ldots,r^m\}\). Every nonzero point of \(K_m\)
has positive \(0\)-coordinate. Hence, if \(0\in J\), any family whose
common support avoids \(J\) contains the zero point, and
\(J\in\mathcal B_\ell\).

Now let \(J\subseteq[m]\). If
\(z=\sum_{j=1}^m\lambda_jr^j\neq0\) and
\(t=\sum_j\lambda_j\), then \(z_e=t-\lambda_e\). Thus a nonzero point
can vanish in at most one coordinate of \([m]\). If
\(|J|\geq\ell+1\), every \(\ell\)-tuple whose common support avoids
\(J\) contains the zero point, so its total value is at most
\(\ell-1\). Conversely, if \(|J|\leq\ell\), use the vertices
\(r^e\), \(e\in J\), and complete the tuple with copies of any one of
them; for \(J=\emptyset\), use arbitrary optimal vertices. The common
support avoids \(J\) and the total value is \(\ell\). This proves the
formula.
\end{proof}

\subsection{Complexity of bottleneck recognition}

We now study the complexity of $\mathcal B_\ell$-membership. 
While membership in $\mathcal B_1$ can be tested by one linear program, membership in $\mathcal B_2$ is more delicate. Indeed,
\(
J\notin\mathcal B_1
\text{ if and only if }
\max\{c^\top z:z\in P,\ z_e=0\text{ for every }e\in J\}>0.
\)
We next show that this tractability does not extend to the second layer.

\begin{theorem}[Co-NP-completeness of second-level recognition]\label{thm:B2-conp}
Given a rational polyhedral cone $K\subseteq\mathbb R^E_+$ described by rational linear inequalities, a rational objective vector $c\in\mathbb Q^E$, and a subset $J\subseteq E$, where $P=K\cap[0,1]^E$ and $\alpha=\max\{c^\top z:z\in P\}>0$, deciding whether $J\in\mathcal B_2$ is co-NP-complete. The result remains true when $c$ is a unit vector, $\alpha=1$, and $K$ is described by a polynomial number of homogeneous linear equations and nonnegativity constraints.
\end{theorem}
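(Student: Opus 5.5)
The plan is to show membership in co-NP by exhibiting short certificates of non-membership. Hardness will come from a reduction from \emph{set splitting} (hypergraph 2-colourability), which is NP-complete.

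\emph{Membership in co-NP.} By definition, \(J\notin\mathcal B_2\) if and only if there exist \(z^1,z^2\in P\) with \(\supp(z^1)\cap\supp(z^2)\cap J=\emptyset\) and \(c^\top z^1+c^\top z^2>\alpha\). Any such pair induces a set \(A\subseteq J\) such that \(z^1\) vanishes on \(A\) and \(z^2\) vanishes on \(J\setminus A\); conversely, any such \(A\) gives feasible choices. So I will use the set \(A\) itself as the certificate. Given \(A\), the verifier computes \(\alpha\) by one LP. It then solves the two LPs \(\max\{c^\top z:z\in P,\ z_e=0\ \forall e\in A\}\) and \(\max\{c^\top z:z\in P,\ z_e=0\ \forall e\in J\setminus A\}\), and checks whether their sum exceeds \(\alpha\). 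All of these LPs are solvable in polynomial time from the inequality description of \(K\). Hence the complement of \(\mathcal B_2\)-membership is in NP.

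\emph{Hardness.} Let a set-splitting instance be a ground set \(V\) and hyperedges \(S_1,\dots,S_p\subseteq V\). The question is whether \(V\) admits a partition \(V=A\cup(V\setminus A)\) such that no \(S_k\) lies entirely inside one side. Put \(E=\{0\}\cup V\cup\{s_1,\dots,s_p\}\) and define
\[
K=\Bigl\{x\in\mathbb R_+^E:\ \sum_{e\in S_k}x_e=x_0+x_{s_k}\ \ (k\in[p])\Bigr\}.
\]
This cone is given by homogeneous equations and nonnegativity constraints; the slacks \(x_{s_k}\) turn the inequalities \(x_0\le\sum_{e\in S_k}x_e\) into equations. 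Take \(c=e_0\) and \(J=V\). The point with \(x_0=1\), \(x_e=1\) for \(e\in V\) and \(x_{s_k}=|S_k|-1\) does not itself lie in the box when \(|S_k|\ge 3\). I will therefore either bound \(|S_k|\le 2\) by padding, or, more simply, rescale the slack as \(x_0+|S_k|\,x_{s_k}\). With either fix this point lies in \(P\), so \(\alpha=1\). These are routine adjustments.

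\emph{Key equivalence.} First suppose \(z\in P\) vanishes on some \(A\subseteq V\) that contains a hyperedge \(S_k\). Then \(z_0\le\sum_{e\in S_k}z_e=0\). Next suppose \(A\) contains no hyperedge. Then the vector equal to \(1\) on \(\{0\}\cup(V\setminus A)\), \(0\) on \(A\), and with appropriate slacks lies in \(P\), because every \(S_k\) meets \(V\setminus A\); this vector has value \(1\). Now consider a pair \(z^1,z^2\) whose common support avoids \(J\), with induced \(A\). If \(A\) and \(V\setminus A\) are both hyperedge-free, the value \(2>1=\alpha\) is attainable. Otherwise one of the two vectors has \(z_0=0\), so the total is at most \(\alpha\). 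Coordinates of \(J\) vanishing in both vectors only shrink the feasible sets, so this case needs no separate treatment. Hence \(J\notin\mathcal B_2\) if and only if the hypergraph is splittable, which gives co-NP-hardness with \(c\) a unit vector and \(\alpha=1\).

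The main obstacle is the exact box bookkeeping of the slack variables so that the value-\(1\) witnesses genuinely lie in \([0,1]^E\). I expect the rescaled-slack equation \(\sum_{e\in S_k}x_e=x_0+|S_k|x_{s_k}\) to handle it: the slack is \((|S_k\setminus A|-1)/|S_k|\in[0,1)\).
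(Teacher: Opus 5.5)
Your proof is correct. The co-NP membership part is essentially the paper's: the paper guesses two sets $S_1,S_2\subseteq E$ with $S_1\cap S_2\cap J=\emptyset$ and restricts supports, and your partition $(A,J\setminus A)$ of $J$ is an equivalent, slightly leaner certificate.

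The hardness part takes a genuinely different route. The paper reduces from the directed two vertex-disjoint paths problem (Fortune--Hopcroft--Wyllie). It builds a two-commodity flow cone on a node-split digraph and puts the commodity indicators $\lambda_1,\lambda_2$ and the vertex loads $x_v$ into $J$. It then has to argue two things: a violating pair must separate the two commodities, and the positive support of each flow contains a terminal path, with vertex-disjointness enforced by $x_v\in J$.

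Your reduction from set splitting uses a covering cone in which $x_0$ is forced to vanish exactly when some hyperedge is entirely zeroed. A violating pair is then literally a proper 2-colouring. This buys three things:
\begin{itemize}
\item the verification takes a few lines, with no flow-decomposition or path-extraction argument;
\item it rests on a more elementary NP-complete problem;
\item restricting to 3-uniform hypergraphs, where set splitting is still NP-complete, it even gives hardness with equations of bounded size.
\end{itemize}
What the paper's construction buys instead is that hardness already appears for network-flow cones close to those in its applications, just one commodity beyond the single-commodity cones that Theorem~\ref{th:stagn} makes tractable.

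Two small points should be fixed.
\begin{itemize}
\item Drop the alternative of ``bounding $|S_k|\le 2$ by padding''. Set splitting with hyperedges of size at most two is bipartiteness testing, so that route cannot preserve hardness. The rescaled slack $x_0+|S_k|\,x_{s_k}$, or $|S_k|-1$ separate slack coordinates, is the right fix.
\item Exclude empty hyperedges, which are trivially unsplittable, or simply work with 3-uniform instances. An empty $S_k$ gives the equation $0=x_0$, hence $\alpha=0$, which violates the promise $\alpha>0$ in the statement.
\end{itemize}
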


\begin{proof}
A certificate for $J\notin\mathcal B_2$ is given by two subsets $S_1,S_2\subseteq E$ such that $S_1\cap S_2\cap J=\emptyset$. For $i\in\{1,2\}$, consider the linear program
$\beta_i=\max\{c^\top z:z\in P,\ z_e=0\text{ for every }e\notin S_i\}.$
If $\beta_1+\beta_2>\alpha$, then there exist $z^1,z^2\in P$ such that $c^\top z^1+c^\top z^2>\alpha$ and $(\supp(z^1)\cap\supp(z^2))\cap J=\emptyset$, so $J\notin\mathcal B_2$. Conversely, the supports of any violating pair provide such sets. Hence non-membership belongs to NP, and membership belongs to co-NP.

We prove co-NP-hardness by reduction from the directed two vertex-disjoint paths problem, which is NP-complete \cite{fortune-hopcroft-wyllie}. An instance consists of a digraph $D=(V,A)$ and four pairwise distinct vertices $s_1,t_1,s_2,t_2$; the question is whether $D$ contains vertex-disjoint directed paths from $s_1$ to $t_1$ and from $s_2$ to $t_2$. 
We can safely assume that each terminal pair is individually linked.

Construct the node-split digraph $\widetilde D=(\widetilde V,\widetilde A)$ by replacing every vertex $v\in V$ with two vertices $v^-$ and $v^+$ joined by the resource arc $h_v=(v^-,v^+)$, and every original arc $(u,v)\in A$ with the arc $(u^+,v^-)$. Let $B$ be the node-arc incidence matrix of $\widetilde D$, with a $+1$ at the tail and a $-1$ at the head of each arc. Introduce nonnegative variables $f^1,f^2\in\mathbb R_+^{\widetilde A}$, $x\in\mathbb R_+^V$, and $\lambda_1,\lambda_2,\tau\in\mathbb R_+$. The coordinate set of the constructed instance is
$E'=\{f^k_a:k\in\{1,2\},\ a\in\widetilde A\}\cup\{x_v:v\in V\}\cup\{\lambda_1,\lambda_2,\tau\}.$
Define the cone 
\[
K = \left\{
\begin{aligned}
&(f^1,f^2,x,\lambda_1,\lambda_2,\tau)\ge 0 : \\
&Bf^k=\lambda_k(\chi_{s_k^-}-\chi_{t_k^+}), &&k=1,2, \\
&x_v=f^1_{h_v}+f^2_{h_v}, &&v\in V, \\
&\tau=\lambda_1+\lambda_2
\end{aligned}
\right\}.
\]

Let $P=K\cap[0,1]^{E'}$, let $c$ be the unit vector corresponding to the coordinate $\tau$, and put
$J=\{\lambda_1,\lambda_2\}\cup\{x_v:v\in V\}.$
Since $\tau\le1$ on $P$, one has $\alpha\le1$. Conversely, a unit flow along any $s_k^-$-$t_k^+$ path, with $\lambda_k=\tau=1$, gives a point of $P$. Hence $\alpha=1$.

Suppose first that the original instance has vertex-disjoint paths $Q_1,Q_2$. Let $z^1\in P$ send one unit of commodity $1$ along the path corresponding to $Q_1$, with $(\lambda_1)^1=\tau^1=1$ and $(\lambda_2)^1=0$, and define $z^2$ symmetrically from $Q_2$. Then $c^\top z^1+c^\top z^2=2>\alpha$. Moreover, neither $\lambda_1$ nor $\lambda_2$ is positive in both points, and vertex-disjointness implies that no coordinate $x_v$ is positive in both points. Hence $J\notin\mathcal B_2$.

Conversely, suppose that $J\notin\mathcal B_2$. Then there exist $z^1,z^2\in P$ such that $\tau^1+\tau^2>1$ and $(\supp(z^1)\cap\supp(z^2))\cap J=\emptyset$. Both $\tau^1$ and $\tau^2$ are positive. Since $\lambda_1,\lambda_2\in J$, each coordinate $\lambda_k$ is positive in at most one of the two points. One point cannot have both $\lambda_1$ and $\lambda_2$ positive, for then both would vanish in the other point and its value of $\tau$ would be zero. After possibly exchanging $z^1$ and $z^2$, we have $(\lambda_1)^1>0$, $(\lambda_2)^1=0$, $(\lambda_1)^2=0$, and $(\lambda_2)^2>0$.

The positive support of the commodity-$1$ flow in $z^1$ contains an $s_1^-$-$t_1^+$ path, and the positive support of the commodity-$2$ flow in $z^2$ contains an $s_2^-$-$t_2^+$ path. If these two paths used the same original vertex $v$, both would traverse $h_v$, and hence $x_v^1>0$ and $x_v^2>0$, contradicting $x_v\in J$. Contracting resource arcs gives the required vertex-disjoint paths in $D$. Therefore non-membership in $\mathcal B_2$ is NP-hard, and membership is co-NP-complete.
\end{proof}

\begin{proposition}[Fixed-parameter tractability]\label{prop:fpt-small-bottlenecks}
Assume that linear optimization over $P$, with additional prescribed zero coordinates, is polynomial-time solvable. For fixed $\ell$, membership of $J$ in $\mathcal B_\ell$ can be decided by solving at most $\ell^{|J|}$ linear programs; hence it is fixed-parameter tractable with parameter $|J|$ for every fixed $\ell$. Membership in $\mathcal B_\infty$ can be decided by solving at most $|J|^{|J|}$ linear programs, and is therefore FPT parameterized by $|J|$.
\end{proposition}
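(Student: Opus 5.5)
The plan is to enumerate, for each element of $J$, which of the $\ell$ vectors is required to vanish on it. Violation of the $\ell$-bottleneck property means there exist $z^1,\ldots,z^\ell\in P$ with $\sum_i c^\top z^i>\alpha(\ell-1)$ such that every $e\in J$ lies outside $\supp(z^i)$ for some $i$. Choosing one such index for each $e$ gives an assignment $\sigma:J\to[\ell]$, and there are at most $\ell^{|J|}$ such maps.

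Fix $\sigma$. The constraint becomes separable: $z^i$ must satisfy $z^i_e=0$ for $e\in\sigma^{-1}(i)$, and nothing else couples the vectors. So we compute, for each $i$,
\[
\beta_i(\sigma)=\max\{c^\top z:z\in P,\ z_e=0\ \text{for } e\in\sigma^{-1}(i)\}.
\]
This is one linear optimization over $P$ with prescribed zero coordinates, polynomial by assumption. It is always feasible, since $0\in P$. Then $J\notin\mathcal B_\ell$ if and only if some $\sigma$ satisfies $\sum_i\beta_i(\sigma)>\alpha(\ell-1)$.

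Both directions are immediate. Maximizers for a good $\sigma$ form a violating family, and a violating family induces a $\sigma$ whose optima dominate the family's values. We also need $\alpha$, which costs one more LP, or it can be absorbed into the count.

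For the stated bound of $\ell^{|J|}$ LPs rather than $\ell\cdot\ell^{|J|}$, I would count distinct LPs. Each LP is determined by a subset $\sigma^{-1}(i)\subseteq J$, so there are at most $2^{|J|}$ distinct programs. This is at most $\ell^{|J|}$ when $\ell\ge2$, and the case $\ell=1$ needs a single LP. Alternatively, for $\ell$ fixed the extra factor $\ell$ is harmless for FPT. I would present the count via distinct subsets to match the statement.

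For $\mathcal B_\infty$, Theorem~\ref{1pro:J} reduces membership of a nonempty $J$ to membership in $\mathcal B_{|J|}$. Applying the above with $\ell=|J|$ gives at most $|J|^{|J|}$ assignments, hence at most that many LPs (again at most $2^{|J|}$ distinct ones). The case $J=\emptyset$ is trivial: $\emptyset\notin\mathcal B_1\supseteq\mathcal B_\infty$, since a single optimal point violates the condition. The running time is $f(|J|)\cdot\mathrm{poly}$, which is FPT.

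The only delicate step is the bookkeeping in the operation count; the correctness of the reduction to the separable LPs is routine.
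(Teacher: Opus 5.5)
Your argument is correct and matches the paper's proof: you enumerate the $\ell^{|J|}$ assignments $\sigma:J\to[\ell]$ and solve the corresponding LPs, and for $\mathcal B_\infty$ you reduce to $\ell=|J|$ via Theorem~\ref{1pro:J} and dispose of $J=\emptyset$ using $\alpha>0$. The only difference is bookkeeping: the paper counts one joint LP over $P^\ell$ per $\sigma$, whereas you split it into separable single-copy problems $\beta(S)$ with $S\subseteq J$, which gives the sharper count of $2^{|J|}$ LPs, and $\alpha=\beta(\emptyset)$ is already among them, so no extra LP is needed when $\ell\ge2$.
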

\begin{proof}
The set $J$ fails to belong to $\mathcal B_\ell$ iff there are $z^1,\ldots,z^\ell\in P$ with $\sum_i c^\top z^i>\alpha(\ell-1)$ and such that each $e\in J$ is zero in at least one of the vectors. Equivalently, there is a map $\sigma:J\to\{1,\ldots,\ell\}$ with $z^{\sigma(e)}_e=0$ for every $e\in J$. For each $\sigma$ we solve the LP
\[
\max\left\{\sum_{i=1}^\ell c^\top z^i:z^i\in P\ \forall i,\ z^{\sigma(e)}_e=0\ \forall e\in J\right\}.
\]
One LP has value larger than $\alpha(\ell-1)$ iff $J\notin\mathcal B_\ell$. There are $\ell^{|J|}$ maps. For $\mathcal B_\infty$, Theorem~\ref{1pro:J} reduces the test to $\ell=|J|$ whenever $J\neq\emptyset$; the empty set is never an $\infty$-bottleneck set because $\alpha>0$.
\end{proof}

\section{Integral truncated cones}\label{sec:integer}

Throughout this section, let $K\subseteq\mathbb R_+^E$ be a rational polyhedral cone and let $P=K\cap[0,1]^E$. We say that $K$ is \emph{upper-box-integer} if $K\cap[0,b]$ is integral for every integral vector $b\ge0$. A stronger standard notion is \emph{box-integrality}: $K$ is box-integer if $K\cap[p,q]$ is integral for every pair of integral vectors $p\le q$. Box-integrality implies upper-box-integrality, but the aggregation result below only needs the latter. We also assume that $P$ has the \emph{integer decomposition property} (IDP): for every integer $\ell\ge1$ and every integer point $x\in\ell P$, there exist integer points $x^1,\ldots,x^\ell\in P$ such that $x=x^1+\cdots+x^\ell$.

For $\ell\ge1$ and $J\subseteq E$, define
\[
b^{\ell,J}_e=
\begin{cases}
\ell-1,& e\in J,\\
\ell,& e\notin J,
\end{cases}
\qquad
P_\ell(J)=K\cap[0,b^{\ell,J}].
\]

\begin{lemma}\label{lem:box-idp}
Assume that $K$ is upper-box-integer and that $P$ has the IDP. Then $P_\ell(J)$ is integral. Moreover, every integer point $x\in P_\ell(J)$ admits a decomposition
$x=x^1+\cdots+x^\ell, \quad x^i\in P\cap\{0,1\}^E,$
such that for every $e\in J$, at least one of the vectors $x^1,\ldots,x^\ell$ satisfies $x^i_e=0$. Consequently,
$V_\ell(J)=\max\{c^\top x:x\in P_\ell(J)\}$, where $V_\ell(J)$ is defined in \eqref{eq:IP}.
\end{lemma}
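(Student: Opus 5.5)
Integrality of $P_\ell(J)$ is immediate: $b^{\ell,J}$ is a nonnegative integral vector, so $K\cap[0,b^{\ell,J}]$ is integral because $K$ is upper-box-integer. The remaining work concerns the decomposition and the identity for $V_\ell(J)$.

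\emph{Decomposition.} Let $x\in P_\ell(J)$ be an integer point. Since $x\le\ell\mathbf 1$ and $K$ is a cone, $x/\ell\in K\cap[0,1]^E=P$, so $x\in\ell P$. The IDP then gives integer points $x^1,\ldots,x^\ell\in P$ with $x=\sum_i x^i$, and each $x^i$ lies in $P\cap\{0,1\}^E$ because $P\subseteq[0,1]^E$. For $e\in J$ we have $x_e=\sum_i x^i_e\le\ell-1$ with each $x^i_e\in\{0,1\}$, so some $x^i_e=0$. This step is direct; note that the IDP is applied to the full scaling $\ell P$, not to the smaller box.

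\emph{The identity $V_\ell(J)=\max\{c^\top x:x\in P_\ell(J)\}$.} I will prove two inequalities.
\begin{itemize}
\item[($\le$)] Take a feasible solution $(z^i,y^i)$ of \eqref{eq:IP} and put $x=\sum_i z^i\in K$. For every $e$, $x_e\le\ell$. For $e\in J$, some $i$ has $y^i_e=0$, hence $z^i_e=0$ and $x_e\le\ell-1$. Therefore $x\in P_\ell(J)$ and $c^\top x=\sum_i c^\top z^i$.
\item[($\ge$)] Because $P_\ell(J)$ is an integral polytope, the linear maximum is attained at an integer vertex $x$. Decompose $x$ as above and set $z^i=x^i$. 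For $e\in J$ put $y^i_e=1$ except at one chosen index $i$ with $x^i_e=0$, where $y^i_e=0$. This is feasible for \eqref{eq:IP} with the same objective value.
\end{itemize}

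There is no real obstacle. The only point needing care is that $P_\ell(J)$ is bounded, hence a polytope, so that the maximum is attained at an integer point; this holds because $P_\ell(J)$ lies in the box $[0,b^{\ell,J}]$.
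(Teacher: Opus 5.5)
Your proof is correct and follows essentially the same route as the paper. Integrality comes from upper-box-integrality applied to $b^{\ell,J}$; the IDP is applied in $\ell P$, followed by a pigeonhole argument on the binary summands for $e\in J$; and the two inequalities for $V_\ell(J)$ come from aggregating $x=\sum_i z^i$ and from decomposing an integral optimal point, where your explicit choice of the $y^i_e$ and your boundedness remark spell out what the paper leaves implicit.
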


\begin{proof}
Since $K$ is upper-box-integer and $b^{\ell,J}$ is integral, $P_\ell(J)$ is integral. Let $x\in P_\ell(J)$ be an integer point. Since $P_\ell(J)\subseteq K\cap[0,\ell]^E=\ell P$, the IDP of $P$ gives
$x=x^1+\cdots+x^\ell, \qquad x^i\in P\cap\mathbb Z^E.$
Since $P\subseteq[0,1]^E$, each $x^i$ is binary. If $e\in J$, then $x_e\le\ell-1$, and therefore at least one of the binary entries $x^1_e,\ldots,x^\ell_e$ is zero.

It remains to prove the equality for $V_\ell(J)$. Given a feasible solution $z^1,\ldots,z^\ell$ of \eqref{eq:IP}, the vector $x=\sum_{i=1}^\ell z^i$ belongs to $K$, satisfies $x_e\le\ell$ for all $e$, and satisfies $x_e\le\ell-1$ for $e\in J$. Hence $x\in P_\ell(J)$, and the objectives agree. This proves
$V_\ell(J)\le\max\{c^\top x:x\in P_\ell(J)\}.$
Conversely, since $P_\ell(J)$ is integral, there is an optimal integer point $x\in P_\ell(J)$. Decompose it as above. For every $e\in J$, at least one summand has zero $e$-coordinate, so the vectors $x^1,\ldots,x^\ell$ can be completed with suitable binary variables $y^i_e$ to a feasible solution of \eqref{eq:IP}. The objective value is $c^\top x$. Thus the reverse inequality holds.
\end{proof}

\begin{theorem}[Level-two collapse]\label{th:stagn}
Let $K\subseteq\mathbb R_+^E$ be an upper-box-integer rational polyhedral cone, let $P=K\cap[0,1]^E$, and assume that $P$ has the IDP. Let $c\in\mathbb Q^E$, let $\alpha=\max\{c^\top x:x\in P\}$, and assume that $\alpha>0$. Then, for every $\ell\ge2$ and every $J\subseteq E$,
\[
\begin{aligned}
J\in\mathcal B_\ell
\quad\Longleftrightarrow\quad
&\text{there exists an optimal solution }u\text{ of the dual problem} \\
&\min\{\mathbf1^Tu:u\ge0,\ u-c\in K^*\}
\text{ with }\supp(u)\subseteq J.
\end{aligned}
\]
Consequently,
$\mathcal B_2=\mathcal B_3=\cdots=\mathcal B_\infty$ and
$\mathcal D_2=\mathcal D_3=\cdots=\mathcal D_\infty$, and
$\mathcal D_2$ is the family of inclusion-minimal supports of optimal
upper-bound multiplier vectors.
\end{theorem}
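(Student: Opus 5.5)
The direction ``$\Leftarrow$'' is already available: if an optimal dual $u$ has $\supp(u)\subseteq J$, then $J\in\mathfrak O\subseteq\mathcal B_\infty\subseteq\mathcal B_\ell$ by Theorem~\ref{prop:dual-sandwich}. So the work is the converse. We fix $\ell\ge2$ and $J\in\mathcal B_\ell$. The plan is to turn the combinatorial condition into an LP statement via Lemma~\ref{lem:box-idp}, and then read off a dual solution by LP duality.

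By Lemma~\ref{lem:box-idp} and the MIP characterization, $J\in\mathcal B_\ell$ means
\[
\max\{c^\top x:x\in K,\ x\le b^{\ell,J}\}\le(\ell-1)\alpha .
\]
Here $x\ge 0$ is automatic, since $K\subseteq\mathbb R^E_+$. Dualizing exactly as in the definition of $\alpha$ gives
\[
\max\{c^\top x:x\in K,\ x\le b\}=\min\{b^\top u:u\ge0,\ u-c\in K^*\},
\]
which is valid by polyhedrality and feasibility ($x=0$; $u$ large). Hence there is a dual feasible $u$ with
\[
\ell\,\mathbf 1^\top u-\sum_{e\in J}u_e=(b^{\ell,J})^\top u\le(\ell-1)\alpha .
\]
Since $u$ is dual feasible for the unit box, $\mathbf 1^\top u\ge\alpha$. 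Write $u(J)=\sum_{e\in J}u_e$ and $u(\bar J)=\sum_{e\notin J}u_e$. Then
\[
(\ell-1)\mathbf 1^\top u+u(\bar J)\le(\ell-1)\alpha .
\]
With $\mathbf 1^\top u\ge\alpha$, this forces $u(\bar J)=0$ and $\mathbf 1^\top u=\alpha$. So $\supp(u)\subseteq J$ and $u$ is optimal for the original dual. This proves ``$\Rightarrow$''. Note that the argument needs $\ell\ge2$ so that the coefficient $\ell-1$ is positive. For $\ell=1$ one only gets feasibility, which is consistent with $\mathcal B_1=\mathfrak F$.

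The consequences follow immediately. The right-hand side of the equivalence does not depend on $\ell$, so $\mathcal B_2=\mathcal B_3=\cdots$, and their common value is $\mathfrak O$. Since $\mathcal B_\infty=\bigcap_\ell\mathcal B_\ell$ (Proposition~\ref{prop:monotone}), it also equals $\mathfrak O$. Taking inclusion-minimal members gives $\mathcal D_2=\cdots=\mathcal D_\infty$. These are the minimal supports of optimal multipliers, because $\mathfrak O$ is up-closed and generated by the supports of optimal $u$.

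The main obstacle is the first step: the identification $V_\ell(J)=\max\{c^\top x:x\in P_\ell(J)\}$, which is where upper-box-integrality and the IDP enter. It is supplied by Lemma~\ref{lem:box-idp}. After that, I expect the only care needed to be in justifying strong duality for the box-truncated cone with the nonuniform bound $b^{\ell,J}$, and in checking the sign bookkeeping that yields $u(\bar J)=0$.
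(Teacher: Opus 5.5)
Your proof is correct and is essentially the paper's argument. Both use Lemma~\ref{lem:box-idp} to replace $V_\ell(J)$ by the linear program over $P_\ell(J)$, dualize it to $\min\{(\ell-1)\mathbf 1^\top u+u(E\setminus J):u\ge0,\ u-c\in K^*\}$, and then combine $\mathbf 1^\top u\ge\alpha$ with $\ell\ge2$ to force $u(E\setminus J)=0$ and $\mathbf 1^\top u=\alpha$. The differences are minor: you get the easy direction from Theorem~\ref{prop:dual-sandwich} (so it visibly needs no integrality hypothesis) rather than from the aggregated dual, and you skip the paper's lower bound $V_\ell(J)\ge(\ell-1)\alpha$, which your forcing argument does not need.
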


\begin{proof}
By Lemma~\ref{lem:box-idp},
$V_\ell(J)=\max\{c^\top x:x\in P_\ell(J)\}$. By strong duality, the dual
of the aggregated problem over $P_\ell(J)$ is
$V_\ell(J)=\min\{(b^{\ell,J})^Tu:u\ge0,\ u-c\in K^*\}$.
Since
$b^{\ell,J}=(\ell-1)\mathbf1+\chi_{E\setminus J}$,
this is
$V_\ell(J)=\min\{(\ell-1)\mathbf1^Tu+u(E\setminus J):
u\ge0,\ u-c\in K^*\}$.
Let $x^*$ be a base optimal primal solution. Since $K$ is a cone,
$(\ell-1)x^*\in K$, and clearly
$(\ell-1)x^*\le b^{\ell,J}$. Thus
$(\ell-1)x^*\in P_\ell(J)$, so
$V_\ell(J)\ge(\ell-1)\alpha$.

Suppose first that there exists a base-dual optimal solution $u$ with
$\supp(u)\subseteq J$. Then $\mathbf1^Tu=\alpha$ and
$u(E\setminus J)=0$, so the aggregated dual gives
$V_\ell(J)\le(\ell-1)\alpha$. Hence
$V_\ell(J)=(\ell-1)\alpha$, and $J\in\mathcal B_\ell$.

Conversely, suppose $J\in\mathcal B_\ell$. Then
$V_\ell(J)\le(\ell-1)\alpha$, and therefore equality holds. Let $u$
be an optimal solution of the aggregated dual. Since $u$ is feasible
for the base dual, $\mathbf1^Tu\ge\alpha$, while
$u(E\setminus J)\ge0$. The equality
$(\ell-1)\mathbf1^Tu+u(E\setminus J)=(\ell-1)\alpha$
forces $\mathbf1^Tu=\alpha$ and $u(E\setminus J)=0$. Thus $u$ is
base-dual optimal and $\supp(u)\subseteq J$.

The characterization is independent of $\ell\ge2$, which proves the
collapse of the families $\mathcal B_\ell$ and of their
inclusion-minimal members. The description of $\mathcal D_2$ follows
by taking inclusion-minimal supports.
\end{proof}

\begin{corollary}\label{cor:stagn-algorithmic}
Under the assumptions of Theorem~\ref{th:stagn}, if $K$ is
well-described and given by a polynomial-time separation oracle, then
for every $\ell\ge2$ membership in $\mathcal B_\ell$ can be decided in
polynomial time.
\end{corollary}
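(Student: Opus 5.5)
By Theorem~\ref{th:stagn}, for every $\ell\ge2$ we have $J\in\mathcal B_\ell$ if and only if the dual problem $\min\{\mathbf1^\top u:u\ge0,\ u-c\in K^*\}$ has an optimal solution $u$ with $\supp(u)\subseteq J$. The plan is therefore to decide this dual condition, which is independent of $\ell$, in polynomial time. Equivalently, by the proof of Theorem~\ref{th:stagn}, we compute
\[
V_\ell(J)=\max\{c^\top x:x\in K,\ 0\le x\le b^{\ell,J}\}
\]
and compare it with $(\ell-1)\alpha$. Since $V_\ell(J)\ge(\ell-1)\alpha$ always holds, membership is equivalent to $V_\ell(J)\le(\ell-1)\alpha$.

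\emph{Separation oracles.} A separation oracle for $P_\ell(J)=K\cap[0,b^{\ell,J}]$ is obtained from the oracle for $K$ by first checking the $2|E|$ box inequalities explicitly. Likewise $P=P_1(\emptyset)$ has a separation oracle.

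\emph{Solving the two LPs.} We apply the ellipsoid method in the Grötschel--Lovász--Schrijver framework to compute $\alpha$ and $V_\ell(J)$ exactly. This requires encoding-length bounds on the facets: because $K$ is well-described, its defining inequalities have polynomially bounded encoding length $\varphi$. Adding box constraints with entries in $\{\ell-1,\ell\}$ keeps $P_\ell(J)$ well-described with facet complexity polynomial in $\varphi$ and $\log\ell$. Both polytopes are bounded and nonempty, since they contain $0$, so strong optimization with rational $c$ yields exact optimal values. We then compare $V_\ell(J)$ with $(\ell-1)\alpha$ in rational arithmetic.

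\emph{Main obstacle.} The only delicate point is that we have oracle access only, not an explicit LP, so exactness and well-describedness must hold for the truncated polytopes, including when $\ell$ is part of the input. Allowing $\ell$ in the input is harmless anyway: by Theorem~\ref{th:stagn} the answer is the same for all $\ell\ge2$, so it suffices to run the test with $\ell=2$.

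\emph{Alternative route.} One could instead test the dual condition directly by minimizing $u(E\setminus J)$ over $\{u\ge0,\ u-c\in K^*,\ \mathbf1^\top u=\alpha\}$. This would need a separation oracle for $K^*$, which by polarity is obtained from optimization over $K$. The primal route above avoids that detour.
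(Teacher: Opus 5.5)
Your proposal is correct and follows essentially the same route as the paper. Both identify $V_\ell(J)$ with the LP over $P_\ell(J)=K\cap[0,b^{\ell,J}]$ (via Lemma~\ref{lem:box-idp}), obtain a separation oracle by checking the box bounds before calling the oracle for $K$, optimize by the ellipsoid method, and compare the result with $(\ell-1)\alpha$; your remarks on facet complexity, on computing $\alpha$ the same way, and on reducing to $\ell=2$ via Theorem~\ref{th:stagn} are valid refinements that the paper leaves implicit.
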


\begin{proof}
By Lemma~\ref{lem:box-idp}, $V_\ell(J)$ is the optimum of
$\max\{c^\top x:x\in P_\ell(J)\}$. Adding the explicit bounds
$0\le x\le b^{\ell,J}$ to a separation oracle for $K$ gives a
polynomial-time separation oracle for $P_\ell(J)$. Linear optimization
over $P_\ell(J)$ is therefore polynomial-time solvable by the ellipsoid
method \cite{gls1993geometric}. Comparing the optimum with
$\alpha(\ell-1)$ decides whether $J\in\mathcal B_\ell$.
\end{proof}

Combining Theorem~\ref{th:stagn} with
Theorem~\ref{prop:dual-sandwich} gives
\(\mathfrak O=\mathcal B_2=\mathcal B_3=\cdots=\mathcal B_\infty\). Thus, under upper-box-integrality and the integer decomposition
property, level one consists of the carriers of feasible dual
certificates, whereas every level from two onward consists of the
carriers of optimal dual certificates.

{Let us now give a description  of $I$ in terms of  optimal dual solution supports. Observe that we are here only requiring the integrality of $P$.  }

\begin{theorem}[Dual description of the optimal core]\label{th:dual-core}
Let $K\subseteq\mathbb R_+^E$ be a rational polyhedral cone and let $P=K\cap[0,1]^E$. Assume that $P$ is integral, that is,
$P=\conv(P\cap\{0,1\}^E).$
Let $\mathcal U$ be the set of optimal solutions of
$\min\{\mathbf1^Tu:u\ge0,\ u-c\in K^*\}.$
Then
$I=\bigcup_{u\in\mathcal U}\supp(u).$
\end{theorem}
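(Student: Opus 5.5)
We prove the two inclusions separately. The inclusion \(\bigcup_{u\in\mathcal U}\supp(u)\subseteq I\) is complementary slackness and needs no integrality. The reverse inclusion is where the integrality of \(P\) is used.

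\emph{Step 1 (\(\supseteq\)).} Let \(u\in\mathcal U\), \(e\in\supp(u)\), and let \(y\in F\) be arbitrary. We show \(y_e=1\); this gives \(y_e>0\) and hence \(e\in I\). Since \(u-c\in K^*\) and \(y\in K\),
\[
\alpha=c^\top y\le u^\top y\le \mathbf1^\top u=\alpha ,
\]
where the second inequality uses \(u\ge0\) and \(y\le\mathbf1\). All inequalities are therefore equalities, so \(u^\top(\mathbf1-y)=0\). With \(u_e>0\) and \(y_e\le1\), this forces \(y_e=1\).

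\emph{Step 2 (\(\subseteq\)).} Fix \(e\in I\). We must produce an optimal dual \(u\) with \(u_e>0\). Since \(\mathcal U\) is convex, averaging one such witness for each \(e\in I\) would even give a single \(u\) with \(\supp(u)=I\), but one witness per \(e\) suffices.

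We argue by LP duality applied to the perturbed program
\[
\max\{c^\top x : x\in K,\ x\le \mathbf1,\ x_e\le 1-\varepsilon\},
\]
or, equivalently, by a sensitivity argument on the right-hand side \(b_e\). The optimal value as a function of the upper bound \(b_e\) is concave and piecewise linear. The set of optimal dual values \(u_e\) over \(\mathcal U\) is exactly the interval between the left and right derivatives of this function at \(b_e=1\). It therefore suffices to show that the left derivative is positive, i.e.\ that
\[
\max\{c^\top x: x\in P,\ x_e\le 1-\varepsilon\}<\alpha \quad\text{for small } \varepsilon>0 .
\]
Once this is established, the dual of the perturbed LP, together with the standard fact that some optimal dual has \(u_e\) equal to the left derivative, yields \(u\in\mathcal U\) with \(u_e>0\).

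\emph{Step 3 (the role of integrality, the main obstacle).} We need to show that \(e\in I\) forces the perturbed optimum below \(\alpha\). Since \(P\) is integral, the face \(F\) is a face of an integral polytope, so its vertices are \(0/1\) vectors. By the remark in the introduction, \(I=\bigcap_{v\in\operatorname{ext}(F)}\supp(v)\). Hence every vertex \(v\) of \(F\) has \(v_e=1\), and so every \(y\in F\) has \(y_e=1\); without integrality one only gets \(y_e>0\), and the argument would fail. Now consider the perturbed polytope \(P\cap\{x_e\le1-\varepsilon\}\). It is compact, and by the previous sentence it is disjoint from \(F\). Consequently its maximum of \(c^\top x\) is attained at a point not in \(F\), and is therefore strictly less than \(\alpha\). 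This gives the positive left derivative needed in Step 2, and thus \(u_e>0\) for some \(u\in\mathcal U\).

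The delicate point is the sensitivity/duality claim in Step 2: that the left derivative of the value function in \(b_e\) is realized as \(u_e\) for some \(u\in\mathcal U\). We would prove it directly rather than quote it. For a fixed small \(\varepsilon\), take an optimal dual \(u^\varepsilon\) of the perturbed LP with bound vector \(\mathbf1-\varepsilon\chi_e\). Weak duality for the original LP gives \(\mathbf1^\top u^\varepsilon\ge\alpha\), while by optimality \(\mathbf1^\top u^\varepsilon-\varepsilon u^\varepsilon_e<\alpha\); together these force \(u^\varepsilon_e>0\).

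By linearity of the value function on a small interval \([1-\varepsilon,1]\) (finitely many dual vertices), a single dual vertex \(u^\varepsilon\) is optimal at both \(b_e=1-\varepsilon\) and \(b_e=1\). Optimality at \(b_e=1\) means \(\mathbf1^\top u^\varepsilon=\alpha\), so \(u^\varepsilon\in\mathcal U\) with \(u^\varepsilon_e>0\). Combined with Step 3, this completes the proof.
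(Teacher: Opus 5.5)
Your proof is correct and has the same skeleton as the paper's. Complementary slackness gives $\bigcup_{u\in\mathcal U}\supp(u)\subseteq I$, and integrality of $P$ is used only to upgrade $y_e>0$ to $y_e=1$ on all of $F$.

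The difference is in the last step. The paper cites the Goldman--Tucker theorem: a strictly complementary optimal pair has a positive multiplier on every upper bound that is tight at its primal component, and $x_e\le 1$ is tight at every optimal point. You instead prove the needed one-constraint instance of strict complementarity by hand:
\begin{itemize}
\item You lower $b_e$ to $1-\varepsilon$ and use compactness and disjointness from $F$ to get a perturbed optimum strictly below $\alpha$.
\item Strong duality then forces $u_e>0$ for every optimal dual of the perturbed problem.
\item The finitely many vertices of the pointed dual polyhedron $\{u\ge0:\ u-c\in K^*\}$ give a single vertex that is optimal both at $b_e=1-\varepsilon$ and at $b_e=1$. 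Since positivity holds for every perturbed optimum, it holds for this vertex in particular.
\end{itemize}

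Your route is self-contained. It also shows that the witness can be taken to be a basic (vertex) optimal multiplier, and it makes the sensitivity meaning explicit: lowering the capacity of $e$ strictly decreases the optimum. The paper's route is a one-line citation. From a single strictly complementary pair it yields one multiplier positive on all of $I$ at once, which you recover by averaging, as you note.
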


\begin{proof}
Let $u\in\mathcal U$ and let $e\in\supp(u)$. For any optimal primal solution $x\in F$, strong duality implies that 
$\mathbf1^Tu-c^\top x=u^T(\mathbf1-x)+(u-c)^Tx=0.$
Both terms on the right are nonnegative. Since $u_e>0$, it follows that $x_e=1$. Thus $e\in I$, proving
$\bigcup_{u\in\mathcal U}\supp(u)\subseteq I.$ \\
Conversely, let $e\in I$. Since $P$ is integral, every extreme point of the optimal face is binary. Hence every optimal point satisfies $x_e=1$. Choose a strictly complementary optimal primal--dual pair \cite{goldman1956theory}. The upper-bound constraint $x_e\le1$ is active at its primal component, so strict complementarity gives $u_e>0$ for the corresponding upper-bound multiplier. Hence $e\in\supp(u)$, proving the reverse inclusion.
\end{proof}

\begin{remark}\label{rem:dual-core-single-support}
The equality in Theorem~\ref{th:dual-core} is a union over all optimal dual multipliers. Equivalently, there exists an optimal dual vector $u^\circ$ such that $\supp(u^\circ)=I$. Indeed, for every $e\in I$, Theorem~\ref{th:dual-core} gives an optimal dual vector $u^e$ with $u^e_e>0$. Averaging these vectors over $e\in I$ gives an optimal dual vector that is positive on every coordinate of $I$. The first inclusion in the proof of Theorem~\ref{th:dual-core} shows that no optimal dual multiplier can be positive outside $I$. Thus the average has support exactly $I$. Notice that such a vector need not have inclusion-minimal support and need not be integral, even when integral optimal multipliers exist.
\end{remark}

We recall that a rational system \(Ax\leq b\) is totally dual integral (TDI) if, for
every integral objective vector for which the primal optimum is finite,
the associated dual has an integral optimal solution. The system is
box-TDI if the augmented system
$Ax\leq b,\quad p\leq x\leq q$
is TDI for every pair of rational vectors \(p\leq q\). A rational
polyhedral cone is box-integer if and only if it admits a box-TDI
description \cite{chervet-grappe-robert}.

\begin{remark}
Theorem~\ref{th:stagn} only requires upper-box-integrality. If, in addition, $K$ is box-integer, then by the box-integer/box-TDI equivalence recalled above it admits a box-TDI description. Hence, when $c$ is integral, the upper-bound multiplier vector $u$ appearing in Theorem~\ref{th:stagn} may be chosen integral. For rational $c$, one may scale $c$ to an integral vector; the corresponding optimal multiplier is scaled by the same factor, so its support is unchanged. This integrality assertion is existential and concerns a box-TDI representation of $K$; it does not say that every optimal multiplier, or the multipliers associated with an arbitrary description of $K$, are integral.
\label{re:box}
\end{remark}

\section{Combinatorial applications}
\label{sec:applications}

The following examples illustrate how the conic framework recovers structural properties in standard combinatorial settings. In each case we specify the ground set, the cone, the integral points of its truncation, the positive-support clutter, and the dual interpretation of the stabilized bottlenecks. In each application we use a subscripted notation for the corresponding common optimal support: for instance $I_M$ for matchings, $I_D$ for cycle subdigraphs, $I_{\rm circ}$ for binary circulations, $I_{\mathcal H}$ for balanced hypergraphs, and $I_{\rm cl}$ for closures.

\subsection{Bipartite matchings}

Let $G=(U,W;E_G)$ be a bipartite graph with edge set $E_G$, and let $M_G\in\{0,1\}^{(U\cup W)\times E_G}$ be its vertex-edge incidence matrix. We work in the ground set $U\cup W$ and define
$K_M=\cone\{\chi_e:e\in E_G\}=\{x\in\mathbb R_+^{U\cup W}:\exists y\in\mathbb R_+^{E_G},\ x=M_Gy\},$
where $\chi_e$ is the incidence vector of the endpoints of $e$. Let $P_M=K_M\cap[0,1]^{U\cup W}$ and take $c=\mathbf1$.

 Since $G$ is bipartite, the incidence matrix $M_G$ is totally unimodular \cite{schrijver1998theory} (TU). Hence $K_M$ is box-integer: for integral vectors $p \le q $, the polytope $\{y\ge0:p \le M_Gy\le q\}$ is integral, and its projection onto the relevant variables yields the integral set  $K_M\cap[p,q]$. The polytope $P_M$ has the IDP. Indeed, an integer point of $kP_M$ is the degree vector of a bipartite multigraph of maximum degree at most $k$, and K\"onig's line-colouring theorem decomposes its edge multiset into $k$ matchings.

The nonzero integral points of $P_M$ are precisely the covered-vertex vectors of matchings. Thus $\alpha=\max\{\mathbf1^Tx:x\in P_M\}=2\nu(G)$, where $\nu(G)$ is the maximum matching size. For this cone, let $I_M$ denote the set of vertices covered by every maximum matching.

The extreme rays of $K_M$ are the edge vectors $\chi_e$. Therefore $\mathcal C$ is the family of edge endpoint sets, and $\mathcal{D}_1 = b(\mathcal C)$ is the family of inclusion-minimal vertex covers. Corollary~\ref{cor:blocker} gives that the vertices covered by every maximum matching form a vertex cover.

The dual cone is
$K_M^*=\{\pi \in\mathbb R^{U\cup W}:\pi_u+\pi_w\ge0\text{ for every }uw\in E_G\}.$
The base dual is
\[
\min\left\{\sum_{v\in U\cup W}\pi_v:\pi_v\ge0,\ \pi_u+\pi_w\ge2\text{ for every }uw\in E_G\right\}.
\]
After the substitution $\pi=2p$, this is twice the vertex-cover LP.
Hence Theorem~\ref{th:stagn} yields, for every $\ell\ge2$,
$
J\in\mathcal B_\ell$
 if and only if $J$ contains a minimum vertex cover of $G$.
Consequently,
\(
\mathcal D_2=\mathcal D_3=\cdots=\mathcal D_\infty
=\{\text{minimum vertex covers of }G\}.
\)

Theorem~\ref{th:general} also yields the following support-intersection property. If $\ell$ matchings have total cardinality exceeding $(\ell-1)\nu(G)$, then there exists a vertex of $I_M$ that is saturated by all of them. Since every minimum vertex cover belongs to $\mathcal{D}_{\infty}$, it contains a vertex that is saturated by all $\ell$ matchings.

\begin{proposition}
For a bipartite graph, the set of vertices covered by every maximum matching is the union of all minimum vertex covers.
\end{proposition}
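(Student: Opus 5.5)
Two inclusions, both obtained from results already in hand for the matching cone $K_M$.

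\emph{Every minimum vertex cover lies in $I_M$.} Take a minimum vertex cover $C$ and let $\pi=2\chi_C$. Then $\pi\ge0$, and $\pi_u+\pi_w\ge2$ on every edge, so $\pi$ is feasible for the base dual. Its value is $2|C|=2\nu(G)=\alpha$ by K\H{o}nig's theorem, so $\pi$ is optimal, i.e.\ $\pi\in\mathcal U$. The first inclusion in the proof of Theorem~\ref{th:dual-core} gives $\supp(\pi)=C\subseteq I_M$; this uses only complementary slackness and no integrality. Alternatively, $C\in\mathcal D_\infty$ by the discussion above, and Theorem~\ref{pro:J} gives $C\subseteq I_M$.

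\emph{Every vertex of $I_M$ lies in some minimum vertex cover.} This is the harder direction. The paper has already shown that $K_M$ is box-integer, so $P_M$ is integral. Theorem~\ref{th:dual-core} therefore applies and gives $I_M=\bigcup_{u\in\mathcal U}\supp(u)$, where $\mathcal U$ is the set of optimal solutions of $\min\{\mathbf1^\top\pi:\pi\ge0,\ \pi_u+\pi_w\ge2\ \forall uw\in E_G\}$. It remains to show that if some optimal $\pi$ has $\pi_v>0$, then some minimum vertex cover contains $v$. The bipartite vertex-cover LP has a totally unimodular constraint matrix, so its optimal face is an integral polytope. Its vertices are $2\chi_C$ for minimum vertex covers $C$: $0/2$ vectors with value $2\nu(G)$, since any coordinate above the minimum needed could be lowered. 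An optimal $\pi$ is a convex combination of these vertices, so $\pi_v>0$ forces $v\in C$ for some vertex $2\chi_C$ in the combination.

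\emph{Obstacle and fallback.} The one delicate point is that $\mathcal U$ must be a bounded face for this convex-combination argument. It is: $\pi\ge0$ and $\mathbf1^\top\pi=\alpha$ bound it. If one prefers to avoid total unimodularity of the dual, Theorem~\ref{th:stagn} gives a second route. Since $C_v:=\supp(u)\in\mathfrak O=\mathcal B_\infty$ and $\mathcal B_2$ consists exactly of the supersets of minimum vertex covers, $\supp(u)$ contains a minimum vertex cover. That alone does not put $v$ in that cover, which is why the face-decomposition step is the one to carry out.

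Combining the two inclusions yields the proposition.
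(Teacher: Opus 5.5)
Your proof is correct. The first inclusion is essentially the paper's: you use the complementary-slackness half of Theorem~\ref{th:dual-core}, while the paper cites Theorem~\ref{pro:J}.

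The harder inclusion takes a genuinely different route. The paper argues combinatorially. If $v$ is covered by every maximum matching, then $\nu(G-v)=\nu(G)-1$. K\"onig's theorem then gives a cover $C$ of $G-v$ of size $\nu(G)-1$, and $C\cup\{v\}$ is a minimum vertex cover of $G$ containing $v$. You instead invoke Theorem~\ref{th:dual-core}, which rests on integrality of $P_M$ and strict complementarity, to obtain an optimal $\pi$ with $\pi_v>0$. You then decompose $\pi$ over the vertices of the optimal face of the vertex-cover LP.

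The paper's argument takes two lines and needs nothing beyond K\"onig's theorem for this direction. Yours is heavier, but it isolates the mechanism behind the equality. Every vertex of the optimal dual face is a scaled characteristic vector of a member of $\mathcal D_\infty$. Hence every optimal dual support is a union of minimal stabilized bottlenecks, and Theorem~\ref{th:dual-core} upgrades to $I=\bigcup_{J\in\mathcal D_\infty}J$. This is exactly what fails in Example~\ref{ex:cycle-subdigraph}: there the optimal dual face has the vertex $3\chi_{\{1\}}+2\chi_{\{8\}}$, whose support $\{1,8\}$ is not in $\mathcal D_2$.

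One step needs tightening. Total unimodularity of the system $\pi_u+\pi_w\ge2$ only gives integral vertices. Lowering oversized coordinates only confines optimal integral entries to $\{0,1,2\}$; for a single edge, $(1,1)$ is an integral optimal point. To get $0/2$ vertices, apply total unimodularity to $p=\pi/2$ with $p_u+p_w\ge1$. Each vertex of the optimal face is then $2p$ with $p$ an integral optimal vertex-cover solution. Any entry $p_v\ge2$ could be lowered to $1$, so $p=\chi_C$ for a minimum vertex cover $C$.
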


\begin{proof}
The inclusion from the union of all minimum vertex covers into the set of vertices covered by every maximum matching follows {from Theorem \ref{pro:J}.}  Conversely, let $v$ be covered by every maximum matching. Then $\nu(G-v)=\nu(G)-1$. By K\"onig's theorem, $G-v$ has a vertex cover $C$ of size $\nu(G)-1$. Then $C\cup\{v\}$ is a vertex cover of $G$ of size $\nu(G)$, so it is a minimum vertex cover containing $v$.
\end{proof}

\subsection{Maximum-order cycle subdigraphs}

Let $D=(V,A)$ be a directed graph and let $B$ be its node-arc incidence matrix. Define
\[
K_D=\left\{x\in\mathbb R_+^V:\exists y\in\mathbb R_+^A,\ By=0,\ x_v=\sum_{a\in\delta^+(v)}y_a\ \forall v\in V\right\},
\]
where $\delta^+(v)$ denotes the set of arcs leaving $v$. We take $c=\mathbf1$ and $P_D=K_D\cap[0,1]^V$. Integral points of $P_D$ are exactly vertex sets of vertex-disjoint directed cycle subdigraphs. For this cone, let $I_D$ denote the set of vertices contained in every maximum-order cycle subdigraph.

\begin{proposition}
$K_D$ is box-integer and $P_D$ satisfies the integer decomposition property.
\end{proposition}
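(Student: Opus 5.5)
The plan is to pass through a network-flow lifting, mirroring the bipartite matching case. Consider the node-split digraph $\widetilde D$: each vertex $v$ is replaced by $v^-,v^+$ joined by the arc $h_v=(v^-,v^+)$, and each arc $(u,v)\in A$ becomes $(u^+,v^-)$. A vector $y\ge0$ with $By=0$ on $D$ is the same thing as a circulation $g$ on $\widetilde D$ with $g_{h_v}=x_v=\sum_{a\in\delta^+(v)}y_a$. Hence
\[
K_D=\{x:\exists g\ge0,\ \widetilde Bg=0,\ x_v=g_{h_v}\}.
\]
That is, $K_D$ is the projection of the circulation cone of $\widetilde D$ onto the resource arcs.

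\textbf{Box-integrality.} Fix integral $p\le q$. The set $K_D\cap[p,q]$ is the projection of
\[
\{g\ge0:\widetilde Bg=0,\ p_v\le g_{h_v}\le q_v\}.
\]
This is a circulation polytope with integral lower and upper bounds: the bounds are $[p_v,q_v]$ on each $h_v$ and $[0,\infty)$ on the other arcs. Its constraint matrix is a network matrix, which is TU, so the polytope is integral; this is Hoffman's circulation theorem. Its vertices therefore project to integral points, and a projection of an integral polytope is integral. So $K_D\cap[p,q]$ is integral. One point needs care: the lifted polyhedron might be unbounded on the ordinary arcs. To handle this, I would note that a bounded circulation value on every $h_v$ bounds the flow on every ordinary arc, since each $(u^+,v^-)$ is the only type of arc leaving $u^+$ and so its flow is at most $g_{h_u}\le q_u$. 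The polyhedron is therefore a polytope.

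\textbf{IDP.} Let $x$ be an integer point of $kP_D$. Then $x\in K_D\cap[0,k\mathbf1]$, and by the integrality above (applied to the face where $g_{h_v}=x_v$) there is an integral circulation $g$ on $\widetilde D$ with $g_{h_v}=x_v\le k$. Reading $g$ back on $D$ gives an integral $y\ge0$ with $By=0$. Every vertex then has equal in- and out-degree $x_v\le k$ in the multidigraph $H$ in which arc $a$ carries multiplicity $y_a$.

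It remains to split $H$ into $k$ vertex-disjoint cycle subdigraphs, each vertex $v$ being used by exactly $x_v$ of them. This is the main step. I would form the bipartite multigraph with sides $V^+$ and $V^-$, placing an edge $u^+v^-$ for each copy of arc $uv$. Its maximum degree is at most $k$. By K\"onig's edge-colouring theorem (the same tool used for $P_M$), it decomposes into $k$ matchings $N_1,\dots,N_k$.

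The difficulty is that a matching $N_i$ corresponds to a subdigraph with in- and out-degree at most $1$, which is a disjoint union of paths and cycles, not necessarily cycles. To fix this, I would first regularize. Add, for each $v$, $k-x_v$ parallel edges $v^+v^-$, thought of as loops. The bipartite multigraph becomes $k$-regular, so K\"onig gives $k$ \emph{perfect} matchings. Each perfect matching is a permutation of $V$, and hence a disjoint union of directed cycles, where the loops correspond to the unused vertices. Discarding the loops leaves in each class a vertex-disjoint cycle subdigraph $C_i$, with $\chi^{V(C_i)}\in P_D$.

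Every vertex $v$ is covered by exactly $k$ matchings in total, of which $k-x_v$ use a loop at $v$. Hence $\sum_i\chi^{V(C_i)}=x$, which proves the IDP.
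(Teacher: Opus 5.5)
Your proof is correct. The box-integrality half is essentially the paper's argument: lift to the node-split digraph, use total unimodularity of the incidence matrix with identity rows appended, and project. Your boundedness check is a harmless addition. One wording slip: the fibre $\{g\ge 0:\widetilde Bg=0,\ g_{h_v}=x_v\ \forall v\}$ is not in general a face of the lifted polytope for $[0,k\mathbf 1]$. It is, however, exactly the lifted polytope for $p=q=x$, so your TU argument gives the integral circulation directly.

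For the IDP you take a genuinely different route from the paper. The paper applies the Baum--Trotter theorem to the TU-described joint polytope $P_{y,x}$, splits the integral lift $(\bar y,\bar x)$ into $k$ integral points of $P_{y,x}$, and projects onto the $x$-coordinates. You instead pad the bipartite multigraph on $V^+\cup V^-$ with $k-x_v$ dummy edges $v^+v^-$ to make it $k$-regular. K\"onig's theorem then gives $k$ perfect matchings; in matrix terms, this decomposes an integer matrix with all line sums $k$ into $k$ permutation matrices. You read each permutation as a vertex-disjoint cycle subdigraph plus dummy fixed points. Since the dummy edges at $v$ all meet $v^+$, they lie in exactly $k-x_v$ distinct classes, which gives $\sum_i\chi^{V(C_i)}=x$.

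The paper's route is shorter and uniform, since the same citation also covers binary circulations and interval hypergraphs. Yours is self-contained and constructive, because a polynomial-time edge-colouring produces the decomposition, and it parallels the K\"onig argument the paper uses for $P_M$. It also makes explicit why padding is needed: without it, the colour classes would only be unions of paths and cycles.
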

\begin{proof}
Let $\widetilde{D} = (\widetilde{V}, \widetilde{A})$ be the split digraph obtained by splitting each vertex $v \in V$ into an in-node $v^-$ and an out-node $v^+$ connected by a resource arc $(v^-, v^+)$ with flow $x_v$, mapping each original arc $(u,v) \in A$ to $(u^+, v^-)$ with flow $y_{uv}$. The joint system defining $K_D$ under arbitrary box bounds $p, q \in \mathbb{Z}^V_+$ is governed by the node-arc incidence matrix $M = [M_y \mid M_x]$ of $\widetilde{D}$:
\[
\begin{bmatrix} M_y & M_x \\ -M_y & -M_x \\ 0 & I \\ 0 & -I \end{bmatrix} \begin{pmatrix} y \\ x \end{pmatrix} \le \begin{pmatrix} 0 \\ 0 \\ q \\ -p \end{pmatrix}, \quad y \ge 0
\]
Because $M$ is a node-arc incidence matrix, it is TU. Stacking rows of the identity matrix preserves total unimodularity, rendering the entire constraint matrix TU. 
Since the right-hand side is integral, the joint polytope is integral by the Hoffman--Kruskal theorem. Its projection onto the $x$-coordinates, $K_D \cap [p, q]$, inherits this integrality, proving that $K_D$ is box-integer.

To establish the Integer Decomposition Property  of $P_D = K_D \cap \{x \in \mathbb{R}^V : 0 \le x \le \mathbf{1}\}$, let $\bar{x} \in kP_D \cap \mathbb{Z}^V$, implying $0 \le \bar{x} \le k\mathbf{1}$. The fiber polytope $P(y) = \{y \in \mathbb{R}_+^A : M_y y = -M_x \bar{x}\}$ is non-empty and governed by a TU submatrix $M_y$ with an integral right-hand side. Thus, its extreme points are integral, meaning $\bar{x}$ lifts to a fully integral circulation $(\bar{y}, \bar{x}) \in \mathbb{Z}_+^A \times \mathbb{Z}^V$. 

This integral vector belongs to the $k$-dilation of the bounded base joint polytope:
\[
P_{y,x} = \left\{(y,x) \in \mathbb{R}_+^A \times \mathbb{R}^V : M_y y + M_x x = 0, \ 0 \le x \le \mathbf{1}\right\}.
\]

Since the constraint matrix of $P_{y,x}$ is TU, the Baum--Trotter decomposition theorem \cite{baum1978integer} guarantees that $(\bar{y}, \bar{x})$ can be written as the sum of $k$ integral points of $P_{y,x}$:
\(
\begin{pmatrix} \bar{y} \\ \bar{x} \end{pmatrix} = \sum_{i=1}^k \begin{pmatrix} y^i \\ x^i \end{pmatrix}
\)
where $(y^i, x^i) \in P_{y,x} \cap (\mathbb{Z}_+^A \times \mathbb{Z}^V)$ for each $i$. Projecting this sum onto the $x$-coordinates yields $\bar{x} = \sum_{i=1}^k x^i$. Because each $x^i \in K_D$ and $0 \le x^i \le \mathbf{1}$, we have $x^i \in P_D \cap \{0,1\}^V$, which completes the proof.
\end{proof}

Therefore, the cone $K_D$ satisfies the hypotheses of Theorem~\ref{th:stagn}. Observe that  $\alpha$ is the maximum number of vertices covered by such a subdigraph. Moreover, inclusion-minimal positive supports in $K_D$ are precisely the inclusion-minimal vertex sets of directed cycles. Thus $\mathcal D_1 = b(\mathcal C)$ is the family of minimal feedback vertex sets.

\begin{corollary}
The vertices contained in every maximum-order cycle subdigraph form a feedback vertex set of $D$.
\end{corollary}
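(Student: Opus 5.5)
This follows from Corollary~\ref{cor:blocker}, once we identify the clutter \(\mathcal C\) for the cone \(K_D\) with \(c=\mathbf 1\). By Corollary~\ref{cor:blocker}, \(I_D\) meets every member of \(\mathcal C\). Here \(\mathcal C\) is the family of inclusion-minimal supports \(\supp(x)\) with \(x\in K_D\) and \(\mathbf 1^\top x>0\), that is, with \(x\neq0\). It therefore suffices to show two things. First, the vertex set of every directed cycle of \(D\) contains a member of \(\mathcal C\). Second, every member of \(\mathcal C\) lies in the support of some point of \(K_D\). Then \(I_D\) meets \(V(Q)\) for every directed cycle \(Q\), and \(I_D\) is a feedback vertex set: deleting \(I_D\) leaves no directed cycle. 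The standing assumption \(\alpha>0\) holds whenever \(D\) has a cycle. If \(D\) is acyclic, the empty set is trivially a feedback vertex set, and that case is treated separately.

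The key step is the link between points of \(K_D\) and directed cycles. Take a directed cycle \(Q\) and let \(y=\chi_{A(Q)}\). This \(y\) is a circulation, so \(By=0\), and its out-flow at each vertex is \(x=\chi_{V(Q)}\). Hence \(x\in K_D\), \(\mathbf 1^\top x>0\) and \(\supp(x)=V(Q)\). The support \(V(Q)\) therefore contains some \(C\in\mathcal C\). Since \(I_D\cap C\neq\emptyset\), we get \(I_D\cap V(Q)\neq\emptyset\).

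For completeness, and to match the remark preceding the corollary, I would also check the converse: every minimal support is the vertex set of a cycle. Take \(x\in K_D\setminus\{0\}\) with lifting circulation \(y\ge0\). The standard flow-decomposition theorem writes \(y\) as a nonnegative combination of incidence vectors of directed cycles. Each of these cycles has vertex set inside \(\supp(x)\), because \(x_v\) is the out-flow at \(v\) and all terms are nonnegative. Minimality of \(\supp(x)\) then forces \(\supp(x)=V(Q)\) for a single cycle \(Q\). This also shows \(\mathcal D_1=b(\mathcal C)\) is the family of minimal feedback vertex sets, although the corollary itself needs only the first direction.

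I expect no real obstacle. The only point needing care is the passage from ``meets every minimal support'' to ``meets every cycle vertex set''. That passage is the containment argument above: any transversal of \(\mathcal C\) meets every superset of a member of \(\mathcal C\).
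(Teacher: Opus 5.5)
Your proposal is correct and follows the paper's argument: apply Corollary~\ref{cor:blocker}, and use that each directed-cycle vertex set $V(Q)$ is the support of the point $\chi_{V(Q)}\in K_D$, so it contains a member of $\mathcal C$ that $I_D$ must meet. Like the paper, you tacitly identify $I_D$ with the common support $I$ of the optimal face; this relies on the integrality of $P_D$ from the preceding proposition, which makes the maximum-order cycle subdigraphs exactly the binary optimal points and makes $\alpha$ their order.
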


\begin{proof}
By Corollary~\ref{cor:blocker}, the common support of all optimal solutions intersects every inclusion-minimal directed-cycle vertex set. Hence it intersects every directed cycle of $D$.
\end{proof}

The dual-cone formulation gives a more refined description. Since
$
K_D=\cone\{\chi_{V(C)}:C\text{ is a directed cycle of }D\},
$
we have
\(
K_D^*=\{s\in\mathbb R^V:s(V(C))\ge0
\text{ for every directed cycle }C\}.
\)
The base dual is
\[
\min\left\{\sum_{v\in V}u_v:u_v\ge0,\ u(V(C))\ge |V(C)|\text{ for every directed cycle }C\right\}.
\]
The support of every feasible solution $u$ of this dual is a feedback vertex set, because each directed cycle must contain a vertex with positive $u$-value. Conversely, any feedback vertex set supports a feasible dual solution after assigning sufficiently large weights to its vertices. Thus the dual can be viewed as a weighted, cycle-length version of the feedback-vertex-set covering problem. Therefore, for every $\ell\ge2$, $J\in\mathcal B_\ell$ if and only if $J$ contains the support of an optimal cycle-length-cover vector $u$, and $\mathcal D_2$ is the family of inclusion-minimal supports of such optimal vectors. Moreover, Theorem~\ref{th:dual-core} gives the explicit core identity
\[
I_D=\bigcup\{\supp(u):u\text{ is optimal for the cycle-length-cover dual}\}.
\]
By Remark~\ref{rem:dual-core-single-support}, there is also an optimal dual vector $u^\circ$ with $\supp(u^\circ)=I_D$, although this vector need not be inclusion-minimal.

{The value $\alpha$ has two further interpretations. First, it is the dimension of the no-meet matroid associated with $D$, equivalently the maximum order of a collection of vertex-disjoint directed cycles \cite{benameur2024nomeet}. Second, in the helicopter-cops/invisible-slow-robber game, a directed version of the hunters-and-rabbit game, the same number is the minimum number of capture attempts: $\alpha=d(N(D))=ca(D)$ \cite{benameur2024meeting}. By box-integrality of $K_D$, the dual has an integral optimum $u$ (Remark \ref{re:box}). This can also be deduced from the results of \cite{benameur2024meeting} by considering an optimal strategy for the cops (or hunters) and defining $u_v$ as the total number of optimal capture attempts assigned to vertex $v$. Consequently, $u$ is integral and dual optimal.
}

\begin{example}\label{ex:cycle-subdigraph}
For illustration, let $D=(V,A)$ be the digraph of
Figure~\ref{fig:intersecting_cycles} having $8$ vertices and $10$ arcs, where $V=\{1,\ldots,8\}$ and
\(
A=\{(1,2), (2,3), (3,4), (4,8), (8,1), (1,5), (5,6),  (6,7), (7,8),  (3,1)\}.
\)

\begin{figure}[htbp]
\centering
\begin{tikzpicture}[
    vertex/.style={circle, draw, minimum size=8mm, font=\small\bfseries, fill=gray!10},
    arc/.style={->, >=stealth, thick, shorten >=1pt, shorten <=1pt}
]
    \node[vertex] (v1) at (0,0.5) {1};
    \node[vertex] (v2) at (-2,1.5) {2};
    \node[vertex] (v3) at (-4,0) {3};
    \node[vertex] (v4) at (-2,-1.5) {4};
    \node[vertex] (v5) at (2,1.5) {5};
    \node[vertex] (v6) at (4,0) {6};
    \node[vertex] (v7) at (2,-1.5) {7};
    \node[vertex] (v8) at (0,-1.5) {8};

    \draw[arc] (v1) to[bend right=15] (v2);
    \draw[arc] (v2) to[bend right=15] (v3);
    \draw[arc] (v3) to[bend right=15] (v4);
    \draw[arc] (v4) to[bend right=15] (v8);
    \draw[arc] (v8) to[bend right=15] (v1);
    \draw[arc] (v3) -- (v1);
    \draw[arc] (v1) to[bend left=15] (v5);
    \draw[arc] (v5) to[bend left=15] (v6);
    \draw[arc] (v6) to[bend left=15] (v7);
    \draw[arc] (v7) to[bend left=15] (v8);
\end{tikzpicture}
\caption{A digraph with two maximum-order cycle subdigraphs intersecting at vertices $1$ and $8$.}
\label{fig:intersecting_cycles}
\end{figure}

The directed cycles have vertex supports
\(
S_1=\{1,2,3\},\ 
S_2=\{1,2,3,4,8\},\
S_3=\{1,5,6,7,8\}.
\)
After retaining inclusion-minimal supports, one gets
\(
\mathcal C=\{\{1,2,3\},\{1,5,6,7,8\}\}.
\)
The blocker is
\[
\begin{aligned}
b(\mathcal C)=\{&\{1\},\{2,5\},\{2,6\},\{2,7\},\{2,8\},
                 \{3,5\},\{3,6\},\{3,7\},\{3,8\}\}.
\end{aligned}
\]

Maximizing $\mathbf1^Tx$ over $P_D$ gives $\alpha=5$. There are two
optimal binary vertices:
$
v^1=(1,1,1,1,0,0,0,1)^\top$, $   
v^2=(1,0,0,0,1,1,1,1)^\top.
$
Thus
$
I_D=\supp(v^1)\cap\supp(v^2)=\{1,8\}.
$
As predicted by Corollary~\ref{cor:blocker}, the set $I_D$ contains a member of $b(\mathcal C)$, namely $\{1\}$.

Let us compute the second bottleneck layer explicitly. The cycle-length-cover dual is to minimize $\sum_{v\in V}u_v$, subject to $u\ge0$,
$u_1+u_2+u_3\ge3$, $u_1+u_2+u_3+u_4+u_8\ge5$ and $u_1+u_5+u_6+u_7+u_8\ge5$.
Since the primal optimum is $\alpha=5$, every optimal dual has total weight $5$. The third constraint, together with optimality, forces $u_2=u_3=u_4=0$. The first two constraints then give $u_1\ge3$ and $u_1+u_8=5$, while all other variables outside $\{1,8\}$ are zero. Thus the optimal dual supports are $\{1\}$ and $\{1,8\}$, and the unique inclusion-minimal one is $\{1\}$. Hence $\mathcal D_2=\mathcal D_3=\cdots=\mathcal D_\infty=\{\{1\}\}$. In particular, the union of the minimal stabilized bottlenecks can be a proper subset of the optimal core: here $\bigcup_{J\in\mathcal D_2}J=\{1\}\subsetneq I_D=\{1,8\}$.
\end{example}

\subsection{Maximum binary circulations}

Let $D=(V,A)$ be a directed graph and let $B$ be its node-arc incidence matrix. Consider the pure circulation cone
$K_{\rm circ}=\{y\in\mathbb R_+^A:By=0\},$
and the truncated polytope
$P_{\rm circ}=K_{\rm circ}\cap[0,1]^A.$
We take $c=\mathbf1$, so the objective maximizes the number of selected arcs. For this cone, let $I_{\rm circ}$ denote the set of arcs contained in every maximum-cardinality binary circulation.

Since $B$ is a node-arc incidence matrix, the system
$By=0,\qquad 0\le y\le\mathbf1$
is totally unimodular. Hence $K_{\rm circ}$ is box-integer and $P_{\rm circ}$ is integral. The IDP of $P_{\rm circ}$ follows from the Baum--Trotter decomposition theorem for totally unimodular systems \cite{baum1978integer}. Its integral points are precisely the binary circulations of $D$, that is, arc sets $R\subseteq A$ for which every vertex has the same indegree and outdegree in $R$. Equivalently, every nonzero binary circulation is an arc-disjoint union of directed cycles.

The cone $K_{\rm circ}$ is generated by the incidence vectors of directed cycles:
$K_{\rm circ}=\cone\{\chi_{A(C)}:C\text{ is a directed cycle of }D\}.$
Therefore the positive inclusion-minimal supports of $K_{\rm circ}$ are precisely the arc sets of directed cycles. Hence $\mathcal C$ is the directed-cycle clutter on arcs, and $b(\mathcal C)$ is the family of minimal feedback arc sets.

\begin{corollary}
The arcs contained in every maximum-cardinality binary circulation form a feedback arc set.
\end{corollary}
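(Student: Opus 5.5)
The plan is to apply Corollary~\ref{cor:blocker} directly to the cone \(K_{\rm circ}\) with \(c=\mathbf 1\), following the pattern used for maximum-order cycle subdigraphs. First, I verify the standing assumption \(\alpha>0\). If \(D\) has no directed cycle, then \(K_{\rm circ}=\{0\}\) and the statement is vacuous: the empty set is a feedback arc set, and every maximum binary circulation is empty. I will state this degenerate case separately. Otherwise, the incidence vector of any directed cycle lies in \(P_{\rm circ}\) and has positive objective, so \(\alpha>0\).

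Next, I identify the clutter \(\mathcal C\). By Proposition~\ref{prop:poly}, \(\mathcal C\) consists of the inclusion-minimal supports of \(c\)-positive extreme rays of \(K_{\rm circ}\). Since \(c=\mathbf 1\) and \(K_{\rm circ}\subseteq\mathbb R_+^A\), every nonzero ray is \(c\)-positive. Since \(K_{\rm circ}\) is generated by directed-cycle incidence vectors, every member of \(\mathcal C\) is the arc set of some directed cycle. Conversely, every directed cycle arc set contains a member of \(\mathcal C\), because any \(z\in K_{\rm circ}\) with \(\supp(z)=A(C)\) contains a minimal positive support. As stated in the text, these minimal supports are exactly the cycle arc sets. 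In fact, only the weaker fact is needed: every directed cycle's arc set contains some member of \(\mathcal C\).

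Then I invoke Corollary~\ref{cor:blocker}: \(I_{\rm circ}\) meets every member of \(\mathcal C\), and hence meets \(A(C)\) for every directed cycle \(C\). Alternatively, I can apply Proposition~\ref{thm:main} directly with \(z=\chi_{A(C)}\in K_{\rm circ}\), which satisfies \(\mathbf 1^\top z=|A(C)|>0\). This avoids the clutter bookkeeping entirely, and it is the route I would actually write.

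Finally, I need \(I_{\rm circ}\) to be the arc set common to all \emph{maximum-cardinality binary circulations}, not merely the common support of all optimal points of \(P_{\rm circ}\). This is the only step requiring care. Since \(P_{\rm circ}\) is integral, the extreme points of the optimal face \(F\) are binary, so they are exactly the maximum binary circulations. By the remark in the introduction, \(I=\bigcap_{v\in\operatorname{ext}(F)}\supp(v)\). Hence the two notions of common support coincide, and a set meeting every directed cycle is a feedback arc set.
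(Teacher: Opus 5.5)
Your proof is correct and follows the paper's route. The paper simply invokes Corollary~\ref{cor:blocker} with $\mathcal C$ taken to be the directed-cycle clutter, and your direct application of Proposition~\ref{thm:main} to $z=\chi_{A(C)}$ is the same argument taken one step earlier. Your treatment of the acyclic case ($\alpha=0$) and your use of the integrality of $P_{\rm circ}$ to identify $I$ with the intersection of all maximum binary circulations fill in details that the paper leaves implicit.
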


\begin{proof}
By Corollary~\ref{cor:blocker}, the common support of all maximum-cardinality binary circulations intersects every directed cycle. This is exactly the definition of a feedback arc set.
\end{proof}

Theorem~\ref{th:general} also gives a higher-order intersection statement. If $R_1,\ldots,R_\ell$ are binary circulations and
$|R_1|+\cdots+|R_\ell|>(\ell-1)\alpha,$
where $\alpha$ is the maximum size of a binary circulation, then $R_1,\ldots,R_\ell$ share an arc that belongs to every maximum-cardinality binary circulation.

The dual description is again explicit. Since the dual cone is defined using the following set of constraints: 
$K_{\rm circ}^*=\{s\in\mathbb R^A:s(A(C))\ge0\text{ for every directed cycle }C\},$
the base dual is
\[
\min\left\{\sum_{a\in A}u_a:u_a\ge0,\ u(A(C))\ge |A(C)|\text{ for every directed cycle }C\right\}.
\]
Thus Theorem~\ref{th:stagn} yields
\(
\mathcal D_2=\cdots=\mathcal D_\infty
=\min_{\subseteq}\{\supp(u):u\text{ is dual-optimal}\}.
\)
Every member of $\mathcal D_2$ is a feedback arc set. The dual-core theorem gives
\[
I_{\rm circ}
=\bigcup\{\supp(u):u\text{ is optimal for the
cycle-length feedback-arc cover}\}.
\]
Thus an arc belongs to every maximum-cardinality binary
circulation if and only if it receives positive price in some optimal
cycle-length feedback-arc cover.

The connection with strong maximum circulations is particularly direct
in the unit-capacity case. Following Atkinson et
al.~\cite{atkinson2023strong}, let
\[
A^\star=
\{a\in A:\text{some maximum circulation }y\in P_{\rm circ}
\text{ satisfies }y_a<1\}.
\]
They prove that there exists a strong maximum circulation whose residual
arc set is exactly \(A^\star\), the union of the residual arc sets of all
maximum circulations. Since \(P_{\rm circ}\) is integral,
\(
I_{\rm circ}=A\setminus A^\star.
\)
Thus \(I_{\rm circ}\) is precisely the set of arcs saturated by every
maximum unit-capacity circulation.

After eliminating the node-potential variables from the standard
circulation dual, its upper-bound multipliers are exactly the feasible
vectors of the cycle-length feedback-arc-cover dual above. Ordinary
complementary slackness gives
\(\supp(u)\subseteq A\setminus A^\star\) for every optimal multiplier,
while the strong complementary-slackness result of Atkinson et
al.~yields an optimal multiplier \(u^\circ\) satisfying
\(\supp(u^\circ)=A\setminus A^\star\). Consequently,
\(
A\setminus A^\star
=I_{\rm circ}
=\bigcup\bigl\{\supp(u):u\text{ is optimal for the cycle-length feedback-arc cover}\bigr\}.
\)
Hence, in the unit-capacity circulation application,
Theorem~\ref{th:dual-core} recovers a consequence implicit in the
strong-maximum-circulation theory of Atkinson et al.; its contribution
is to place this support phenomenon in the general truncated-cone
framework.

\subsection{Balanced hypergraphs}

Let $\mathcal H=(V,\mathcal E)$ be a balanced hypergraph.
Its incidence matrix is denoted by $M_{\mathcal H}$.
{Recall that } a hypergraph is balanced if its incidence matrix contains no square submatrix of odd order with all row sums and all column sums equal to $2$; equivalently, it contains no strong odd cycle in the sense of Berge \cite{berge1989}. For balanced hypergraphs, the packing polytope
$Q=\{y\in\mathbb R_+^{\mathcal E}:M_{\mathcal H} y\le \mathbf1\}$
is integral \cite{berge1989,cornuejols2001combinatorial}. Define
$K_{\mathcal H}=\cone\{\chi_F:F\in\mathcal E\}=\{x\in\mathbb R_+^V:\exists y\in\mathbb R_+^{\mathcal E},\ x= M_{\mathcal H} y\}$ and let $P_{\mathcal H}
=
K_{\mathcal H}\cap[0,1]^V$. {Since \(P_{\mathcal H}=M_{\mathcal H}Q\) and \(Q\) is the
convex hull of matchings, \(P_{\mathcal H}\) is the convex hull of
their covered-vertex vectors and is therefore integral.} We take the objective $c=\mathbf1$.
Every integral point of $P_{\mathcal H}$ corresponds to a matching of $\mathcal H$. Moreover,
$\mathbf1^\top x
=
\sum_{F\in M}|F|$,
is the number of vertices covered by the matching represented by $x$.
Thus maximizing $\mathbf1^\top x$ is the maximum covered-vertex matching
problem.

The minimal positive supports are the inclusion-minimal hyperedges. Therefore $\mathcal C$ is the family of inclusion-minimal hyperedges, and $\mathcal{D}_1 = b(\mathcal C)$ is the family of inclusion-minimal hypergraph transversals. For this cone, let $I_{\mathcal H}$ denote the set of vertices covered by every matching that maximizes the number of covered vertices.

Corollary \ref{cor:blocker} can then be translated as follows. 

\begin{corollary}
Let $\mathcal H$ be a balanced hypergraph.
The vertices covered by every matching covering a maximum number of
vertices form a transversal.
\end{corollary}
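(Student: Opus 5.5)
The plan is to apply Corollary~\ref{cor:blocker} to the cone \(K_{\mathcal H}\) with \(c=\mathbf1\), and then translate both sides of its conclusion into hypergraph language.

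First we identify the clutter \(\mathcal C\). Every \(z\in K_{\mathcal H}\) with \(\mathbf1^\top z>0\) has the form \(z=M_{\mathcal H}y\) with \(y\ge0\) and \(y\neq0\). So \(\supp(z)\) is the union of the hyperedges \(F\) with \(y_F>0\), and it contains at least one hyperedge. Conversely, each \(\chi_F\) lies in \(K_{\mathcal H}\), is \(c\)-positive (hyperedges being nonempty), and has support \(F\). Taking inclusion-minimal supports, \(\mathcal C\) is the family of inclusion-minimal hyperedges, as stated just before the corollary. A set meets every inclusion-minimal hyperedge if and only if it meets every hyperedge, since each hyperedge contains a minimal one. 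So transversals of \(\mathcal C\) are exactly hypergraph transversals of \(\mathcal H\).

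Second we identify \(I\) with \(I_{\mathcal H}\). The cone \(K_{\mathcal H}\) is closed convex and lies in \(\mathbb R_+^V\). Also \(\alpha>0\), because any single hyperedge gives a feasible point of positive value. So Corollary~\ref{cor:blocker} applies: the common support \(I\) of all optimal points of \(P_{\mathcal H}\) is a transversal of \(\mathcal C\). As noted before the corollary, \(P_{\mathcal H}\) is integral and is the convex hull of covered-vertex vectors of matchings. Hence the extreme points of the optimal face \(F\) are covered-vertex vectors \(\chi_{\bigcup M}\) of matchings \(M\) maximizing the number of covered vertices. Conversely, every such maximizing matching gives a point of \(F\).

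Using \(I=\bigcap_{v\in\operatorname{ext}(F)}\supp(v)\), this yields \(I\supseteq I_{\mathcal H}\), since every extreme point is the vector of some optimal matching. It also yields \(I\subseteq I_{\mathcal H}\), since every optimal matching gives a point of \(F\) and \(I\) is contained in the support of every point of \(F\). Thus \(I=I_{\mathcal H}\), and \(I_{\mathcal H}\) is a transversal.

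The only delicate step is the identification of extreme points of \(P_{\mathcal H}\) with matching vectors. This rests on the integrality of \(Q\) for balanced hypergraphs, which is exactly the fact already recorded, so no real obstacle remains.
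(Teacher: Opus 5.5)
Your proposal is correct and is essentially the paper's argument: the paper translates Corollary~\ref{cor:blocker} using two facts, namely that $\mathcal C$ consists of the inclusion-minimal hyperedges and that integrality of $P_{\mathcal H}=M_{\mathcal H}Q$ identifies $I$ with $I_{\mathcal H}$, and you simply spell these identifications out. Only the inclusion $I\subseteq I_{\mathcal H}$ is actually needed, since any superset of a transversal is again a transversal.
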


The dual cone of $K_{\mathcal H}$ is $
K_{\mathcal H}^{*}
=
\{u\in\mathbb R^{V}:M_{\mathcal H}^{\top}u\ge0\}$,
or equivalently,
$\sum_{v\in F}u_v \ge 0$
$\forall F\in\mathcal E$.

The primal optimization problem is
$
\max\{\mathbf1^{\top}x:x\in P_{\mathcal H}\}$,
whose dual is
\[
\begin{array}{ll}
\min & \mathbf1^{\top}u\\[1mm]
\mathrm{s.t.} &
\displaystyle\sum_{v\in F}u_v\ge |F|,
\qquad F\in\mathcal E,\\
&u\ge0.
\end{array}
\]

An optimal dual vector assigns nonnegative weights to the vertices so
that every hyperedge receives total weight at least its cardinality.
Since $P_{\mathcal H}$ is integral, Theorem~\ref{th:dual-core}
implies that
\(
I_{\mathcal H}
=
\bigcup
\{\supp(u):u\text{ is optimal for the dual}\}.
\)
By Remark~\ref{rem:dual-core-single-support}, there exists an
optimal dual solution $u^\circ$ satisfying
$\supp(u^\circ)=I_{\mathcal H}$.

Interval hypergraphs constitute an important subclass of balanced hypergraphs.
Their incidence matrices satisfy the consecutive-ones property and are
therefore totally unimodular.
Since the incidence matrix \(M_{\mathcal H}\) of an interval
hypergraph is totally unimodular, the cone $K_{\mathcal H}$ is box-integer and the packing polytope $
Q=\{y\geq0:M_{\mathcal H}y\leq\mathbf1\}
$
has the integer decomposition property. Moreover, every integral point
\(x\in kP_{\mathcal H}\) has an integral lift \(y\geq0\) satisfying
\(M_{\mathcal H}y=x\). Decomposing \(y\) into \(k\) integral points of
\(Q\) and projecting them through \(M_{\mathcal H}\) proves that
\(P_{\mathcal H}\) has the integer decomposition property.
Hence all assumptions of
Theorem~\ref{th:stagn}
hold.

\begin{corollary}
For interval hypergraphs,
$\mathcal D_2
=
\mathcal D_3
=
\cdots
=
\mathcal D_\infty$.
Equivalently, the inclusion-minimal supports of optimal dual solutions
are precisely the minimal level-two bottlenecks.
\end{corollary}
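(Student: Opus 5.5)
The corollary follows once the hypotheses of Theorem~\ref{th:stagn} are checked for $K_{\mathcal H}$ with $c=\mathbf1$; we then read off its conclusion. Three facts are needed: $K_{\mathcal H}$ is a rational polyhedral cone in $\mathbb R_+^V$, it is upper-box-integer, and $P_{\mathcal H}$ has the IDP. The first is immediate, since $K_{\mathcal H}$ is the cone generated by the finitely many $0$-$1$ vectors $\chi_F$. The condition $\alpha>0$ holds whenever $\mathcal E$ contains a nonempty hyperedge, because $\chi_F\in P_{\mathcal H}$ and $\mathbf 1^\top\chi_F>0$.

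\textbf{Box-integrality.} The consecutive-ones property makes $M_{\mathcal H}$ totally unimodular. Fix integral $p\le q$. The lifted polytope
\[
\{(x,y):\ x=M_{\mathcal H}y,\ y\ge0,\ p\le x\le q\}
\]
can be rewritten, after eliminating $x$, as $\{y\ge0:\ p\le M_{\mathcal H}y\le q\}$. Its constraint matrix stacks $M_{\mathcal H}$, $-M_{\mathcal H}$ and $-I$, which is still TU, and the right-hand side is integral. By Hoffman--Kruskal its vertices are integral, and $K_{\mathcal H}\cap[p,q]$ is the image of this polytope under the integral linear map $M_{\mathcal H}$.

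The main obstacle is that an integral projection of an integral polytope need not have integral vertices in general. We avoid this with a direct argument: every vertex $x$ of the projection maximizes some linear functional $w^\top x$ over $K_{\mathcal H}\cap[p,q]$. The same optimum is attained over the lifted polytope by maximizing $(M_{\mathcal H}^\top w)^\top y$, and it is attained at an integral vertex $y$. Hence $w^\top x$ is maximized at the integral point $M_{\mathcal H}y$. Choosing $w$ so that $x$ is the unique maximizer forces $x=M_{\mathcal H}y$, which is integral. The same argument with $p=0$ and $q=\mathbf1$ shows that $P_{\mathcal H}$ is integral.

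\textbf{IDP.} Let $x\in kP_{\mathcal H}\cap\mathbb Z^V$. The fiber $\{y\ge0: M_{\mathcal H}y=x\}$ is nonempty because $x\in K_{\mathcal H}$, and it is TU-defined with integral right-hand side, so it has an integral point $y$. This $y$ satisfies $M_{\mathcal H}y\le k\mathbf1$, so $y\in kQ$, where $Q=\{y\ge0:M_{\mathcal H}y\le\mathbf1\}$ is TU-defined. The Baum--Trotter theorem \cite{baum1978integer} then writes $y=y^1+\cdots+y^k$ with each $y^i$ an integral point of $Q$. Setting $x^i=M_{\mathcal H}y^i$ gives integral points of $P_{\mathcal H}$ whose sum is $x$.

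\textbf{Conclusion.} With all hypotheses verified, Theorem~\ref{th:stagn} yields $\mathcal B_2=\mathcal B_3=\cdots=\mathcal B_\infty$, and taking inclusion-minimal members gives $\mathcal D_2=\cdots=\mathcal D_\infty$. The same theorem identifies $\mathcal D_2$ as the inclusion-minimal supports of optimal solutions of the displayed dual, which is the stated equivalence.
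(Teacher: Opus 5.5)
Your proposal is correct and follows the paper's route. Total unimodularity of the interval incidence matrix gives box-integrality of $K_{\mathcal H}$ through the lifted system; an integral lift of $x\in kP_{\mathcal H}$, decomposed in $Q$ by Baum--Trotter and pushed through $M_{\mathcal H}$, gives the IDP; and Theorem~\ref{th:stagn} then yields the collapse and the dual description of $\mathcal D_2$.

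One small correction: the ``obstacle'' you mention does not exist. The image of an integral polytope under an integral linear map is the convex hull of the integral images of its vertices, so it is automatically integral, and your unique-maximizer argument is valid but unnecessary.
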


\subsection{Maximum-weight closures in dependency digraphs}

Let $D=(V,A)$ be a directed dependency graph. An arc $(u,v)$ means that selecting $v$ requires selecting $u$. A set $S\subseteq V$ is a closure if
$v\in S\text{ and }(u,v)\in A\quad\Longrightarrow\quad u\in S.$
Let $\mathcal F_{\rm cl}$ denote the family of closures. Define
$K_{\rm cl}=\{x\in\mathbb R_+^V:x_v-x_u\le0\text{ for every }(u,v)\in A\}$
and $P_{\rm cl}=K_{\rm cl}\cap[0,1]^V$. The integral points of $P_{\rm cl}$ are exactly the characteristic vectors of closures.

The general results apply to this cone. The coefficient matrix of the inequalities $x_v-x_u\le0$ is the transpose of a node--arc incidence matrix; augmenting it with identity rows for the box bounds preserves total unimodularity. Hence $K_{\rm cl}$ is box-integer. Moreover, $P_{\rm cl}$ has the IDP. Indeed, if $x\in kP_{\rm cl}\cap\mathbb Z^V$, define
$S_i=\{v\in V:x_v\ge i\},\qquad i=1,\ldots,k.$
Each $S_i$ is a closure: if $v\in S_i$ and $(u,v)\in A$, then $x_u\ge x_v\ge i$, so $u\in S_i$. Therefore
$x=\chi_{S_1}+\cdots+\chi_{S_k},$
which proves the IDP. Thus Theorem~\ref{th:stagn} applies.

For a utility vector $c\in\mathbb R^V$, the problem $\max\{c^\top x:x\in P_{\rm cl}\}$ is the maximum-weight closure problem \cite{picard1976}. For this cone, let $I_{\rm cl}$ denote the set of vertices belonging to every maximum-weight closure. The cone $K_{\rm cl}$ is generated by closure vectors: every nonnegative vector satisfying the dependency inequalities decomposes into its level sets. Consequently, the clutter of positive directions is
\(
\mathcal C=\min_{\subseteq}\{S\in\mathcal F_{\rm cl}:c(S)>0\}.
\)
Corollary~\ref{cor:blocker} therefore gives the following statement: the vertices belonging to every maximum-weight closure contain a transversal of the inclusion-minimal positive closures.

The higher-order theorem gives the corresponding pigeonhole form. If $S_1,\ldots,S_\ell\in\mathcal F_{\rm cl}$ satisfy
$\sum_{i=1}^\ell c(S_i)>\alpha(\ell-1),$
then
$I_{\rm cl}\cap\bigcap_{i=1}^\ell S_i\neq\emptyset.$

The dual description is also explicit. Since
$K_{\rm cl}=\cone\{\chi_S:S\in\mathcal F_{\rm cl}\},$
we have
$K_{\rm cl}^*=\{s\in\mathbb R^V:s(S)\ge0\text{ for every }S\in\mathcal F_{\rm cl}\}.$
The base dual is therefore
\begin{equation}
\min\left\{\sum_{v\in V}u_v:
\begin{array}{l}
u_v\ge0\quad(v\in V),\\
u(S)\ge c(S)\quad(S\in\mathcal F_{\rm cl})
\end{array}\right\}.
\label{eq:dual:cl}
\end{equation}
Hence, for every $\ell\ge2$,
$
J\in\mathcal B_\ell$
if and only if $J$ contains the support of an optimal solution of the dual problem \eqref{eq:dual:cl}.

Thus the bottleneck hierarchy collapses for dependency closures, and
\(
\mathcal D_2=\mathcal D_3=\cdots=\mathcal D_\infty
=\min_{\subseteq}\{\supp(u):u\text{ is optimal for \eqref{eq:dual:cl}}\}.
\)
By Theorem~\ref{th:dual-core},
\(
I_{\rm cl}
=\bigcup\{\supp(u):u\text{ is optimal for the closure-cover dual}\}.
\)

In words, a vertex belongs to every maximum-weight closure if and only if it receives positive upper-bound price in some optimal closure-cover dual solution.

\section{Concluding remarks}

We developed a conic framework for common supports of optimal solutions
in unit-box truncations. The support-transversal theorem identifies the
common optimal support with a transversal of the clutter of positive
directions, while the bottleneck hierarchy records increasingly strong
common-support obstructions. The hierarchy always stabilizes by level
\(|E|-1\).

For polyhedral cones, the dual-support sandwich gives a direct meaning
to the two ends of the hierarchy: \(\mathcal B_1\) consists exactly of
the carriers of feasible upper-bound multipliers, whereas every carrier
of an optimal multiplier belongs to \(\mathcal B_\infty\).
 {The breadth of the join-semilattice generated under union by the
zero sets of the nonzero vertices gives an upper bound on the universal
stabilization depth, and in the simplicial case this bound is exact.}
Every possible depth occurs for a  simplex, and the general bound is best possible.

Under upper-box-integrality and the integer decomposition property, the
dual and combinatorial pictures coincide from level two onward:
\(\mathfrak O=\mathcal B_2=\mathcal B_3=\cdots=\mathcal B_\infty\).
The dual-core theorem requires only integrality of the unit truncation
and identifies the common optimal support with the union of the
supports of all optimal upper-bound multipliers.

 {Several structural questions remain. The general semilattice-breadth bound}
raises the question of characterizing when it is exact and, more
generally, of finding a refinement---possibly involving the face
semilattice of \(P\)---that determines the universal stabilization
depth of a nonsimplicial truncation. A second question is to
characterize when the combinatorial equality
\(\mathcal B_2=\mathcal B_\infty\) is accompanied by the stronger
dual-carrier equality \(\mathcal B_\infty=\mathfrak O\).
It is also natural to determine when the stabilized minimal
bottlenecks cover the entire optimal core. Finally, although
\(\mathcal B_2\)-membership is co-NP-complete and fixed-parameter
tractable in \(|J|\), the complexity of recognizing or optimizing over
the inclusion-minimal families \(\mathcal D_\ell\) remains open.

\section*{Declaration of generative AI and AI-assisted technologies in the manuscript preparation process}
The authors used ChatGPT for language editing, literature-search
assistance, and critical review of the mathematical exposition.
After using ChatGPT, the authors reviewed and edited the content as needed and take full responsibility for the content of the article.

\end{document}